\documentclass{article}

\usepackage {amsmath, amssymb, amscd, amsthm, mathabx, mathrsfs, hyperref, enumerate, url,graphicx}
\usepackage{caption,subcaption,pinlabel}
\usepackage{epstopdf}
\usepackage[text={6.8in,9in}]{geometry}
\usepackage{color}
\usepackage[all]{xy}

\newtheorem {theorem}{Theorem}[section]
\newtheorem {lemma}[theorem]{Lemma}
\newtheorem {proposition}[theorem]{Proposition}
\newtheorem {corollary}[theorem]{Corollary}
\newtheorem {conjecture}[theorem]{Conjecture}
\newtheorem {definition}[theorem]{Definition}
\newtheorem {question}[theorem]{Question}

\numberwithin{equation}{section}

\newcommand{\Z}{\mathbb{Z}}

\renewcommand{\P}{\widehat{P}}
\newcommand{\Q}{\widehat{Q}}

\makeatletter
\let\@fnsymbol\@arabic
\makeatother

\title{Divisibility of Great Webs and Reducible Dehn Surgery}
\author{ Nicholas Zufelt\footnote{Department of Mathematics, The University of Texas, Austin, TX 78712, USA, \url{nzufelt@math.utexas.edu}}}
\date{}

\begin{document}
\maketitle

\abstract{We use the combinatorial techniques of graphs of intersection to study reducible Dehn surgeries on knots in $S^3$.  In particular, in the event that a reducible surgery on a knot $K$ in $S^3$ of slope $r$ produces a manifold with more than two connected summands, we show that $|r|\leq b$, where $b$ denotes the bridge number of $K$.  As a consequence, this possibility is ruled out for knots with $b \leq 5$ and for positive braid closures.}

\section{Introduction}\label{sec:introduction}
Let $K$ be a nontrivial knot in $S^3$, and denote by $S^3_{r}(K)$ the result of $r$-Dehn surgery on $K$, for $r\in\mathbb{Q} $.  Recall that a 2-sphere in a 3-manifold is \textit{essential} if it is not the boundary of any ball in the manifold. In this paper, we will be concerned about the case when $S^3_{r}(K)$ is a \textit{reducible} manifold, that is, a manifold which contains an essential sphere.   By \cite{pap:gabaifoliations3}, we may assume that in this situation $S^3_{r}(K)$ will decompose as a connected sum and that $r\neq 0$.  The standard example to consider is the reducible surgery on a cable knot $C_{p,q}(K)$ with pattern knot $K$ \cite{pap:gordonsatellite}:
$$S^3_\frac{pq}{1}(C_{p,q}(K)) \cong L(q,p) \# S^3_\frac{p}{q}(K).$$
In fact, the Cabling Conjecture asserts that this is the only possibility.

\begin{conjecture}[Cabling Conjecture \cite{pap:gonz-short}] 
If $K$ is a nontrivial knot in $S^3$ with $S^3_r(K)$ reducible, then $K$ is a cable knot and $r$ is given by the cabling annulus.  
\end{conjecture}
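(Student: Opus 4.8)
The Cabling Conjecture is the guiding open problem of this paper, and I do not expect to settle it outright; what follows is the strategy by which the graph-of-intersection machinery attacks it, together with the obstruction that confines current work (including the present paper) to partial results. The plan is to argue by contradiction. Suppose $K$ is nontrivial, $S^3_r(K)$ is reducible, but $K$ is not a cable knot with $r$ the cabling slope. By \cite{pap:gabaifoliations3} we may assume $S^3_r(K)$ is a nontrivial connected sum and $r\neq 0$, and one reduces further to the case that $r$ is an integer. Fix an essential sphere and isotope it to meet the surgery solid torus $V$ in a minimal number of meridian disks, so that its restriction $Q$ to the exterior $E(K)$ is an essential planar surface whose capped-off version is the sphere $\Q$ and whose boundary consists of $q\geq 1$ copies of the slope $r$ on $\partial E(K)$.

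To detect the cabling I would introduce a second surface to play against $Q$: placing $K$ in thin position (equivalently, choosing a bridge sphere) yields a meridional planar surface $P$ with capped sphere $\P$. After making $P$ and $Q$ transverse and minimizing $|P\cap Q|$ together with $|\partial P|$ and $|\partial Q|$, the arcs of $P\cap Q$ become the edges of a pair of labelled graphs $G_P\subset\P$ and $G_Q\subset\Q$ whose fat vertices are the boundary curves. These graphs obey the parity rule relating the sign of an edge in the two graphs, and---since $\Q$ is a sphere---an Euler-characteristic count forces the appearance of a \emph{great web} in $G_Q$: a subgraph in which the divisibility property developed in this paper rigidly constrains how the labels repeat around each vertex.

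The core of the argument is to show that this divisibility forces $q$ to divide the relevant label count, and that the resulting highly structured edge pattern can be realized only when $K$ lies on the boundary of an essential annulus---a \emph{cabling annulus}---with $\Q$ arising from the cabling sphere and $r$ equal to the slope of that annulus. Every competing configuration should produce either a monogon, or a Scharlemann cycle bounding a disk in $E(K)$ and hence contradicting the irreducibility and boundary-incompressibility of the exterior, or else violate the minimality of $|P\cap Q|$. In this way one hopes to squeeze all survivors into the single cabling picture that the conjecture predicts.

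The hard part---and precisely the reason this remains a conjecture---is the global control of the great web. The divisibility estimates terminate the combinatorics only once the complexity of $G_Q$ is bounded by an auxiliary hypothesis, for instance a restriction on the number of prime summands of $S^3_r(K)$ or a small bridge number $b$ for $K$. Absent such a bound the great web may be arbitrarily intricate, and no known argument excludes every non-cabling configuration. I would therefore expect to establish not the full conjecture but its quantitative shadow $|r|\leq b$ in the presence of more than two summands, from which the stated special cases follow; proving the conjecture in full would require a uniform termination of the web analysis that is currently out of reach.
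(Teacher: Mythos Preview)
You are right that this statement is a conjecture and that the paper does not prove it; there is no ``paper's own proof'' to compare against. The paper states the Cabling Conjecture as motivation and then proves only the partial results Theorem~\ref{thm:threesummandsbound} and Corollary~\ref{cor:primeweb}. Your honest acknowledgment that the argument terminates only under auxiliary hypotheses (bounded bridge number, more than two summands) is the correct assessment.

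That said, your sketch of the machinery has the roles of the two surfaces reversed, and this matters for the logic. In the paper it is $P$ that comes from the reducing sphere(s) in $S^3_r(K)$ and $Q$ that is the meridional planar surface produced by thin position, capping to a sphere $\Q$ in $S^3$. The great web lives in $G_Q$ not merely ``because $\Q$ is a sphere''---both $\P$ and $\Q$ are spheres---but because the alternative in Theorem~\ref{thm:alltypesorweb} is that $G_P$ represents all types, which would force torsion in $H_1$ of the manifold containing $\Q$, namely $S^3$; that is impossible, so the web is forced onto $G_Q$. Also, the divisibility statement is not that ``$q$ divides the relevant label count'' but rather that the Scharlemann-cycle length $l$ (or $l_1$, $l_2$) divides $v=|V(\Lambda)|$, the number of vertices of the great web; combined with $v\le q/2\le b$ this yields the bound $|r|\le b$ in the three-summands case. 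Finally, the paper never attempts to locate a cabling annulus directly from the web; all arguments proceed by deriving numerical or topological contradictions.
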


Here torus knots are considered to be cables.  The Cabling Conjecture is known for many classes of knots \cite{pap:EMcablingconjSI}, \cite{pap:hayashi_shimokawa}, \cite{pap:menasco-this}, \cite{pap:mosertorus}, \cite{pap:scharlemann-reducible}, \cite{pap:wuarborescent}, so by \cite{pap:thurston3mfldkleinian} it suffices to assume $K$ is hyperbolic. Additionally it is known that an arbitrary reducible surgery on a knot in $S^3$ coarsely resembles the cabled surgery: at least one summand is a lens space \cite{pap:gordon-luecke-complement}, and the reducing slope $r$ is an integer \cite{pap:gordon-luecke-integral}.  It follows that the $S^3_\frac{p}{q}(K)$ summand of the cable knot surgery is irreducible, so a cable knot's reducible surgery has two irreducible summands. This can be viewed as another approximation to the Cabling Conjecture, and remains unsolved.

\begin{conjecture}[Two Summands Conjecture] 
If $K$ is a nontrivial knot in $S^3$ with $S^3_r(K)$ reducible, then $S^3_r(K)$ consists of two irreducible connected summands.  
\end{conjecture}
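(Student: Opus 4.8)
The plan is to argue by contradiction. Suppose $K$ is a nontrivial knot (hence, by the reductions recalled above, hyperbolic) with $S^3_r(K)$ reducible and admitting a prime decomposition into three or more irreducible summands. By the Kneser--Milnor decomposition this yields at least two disjoint, non-parallel essential spheres $\widehat{S}_1,\widehat{S}_2$ in $S^3_r(K)$ whose union bounds no product region. Since $r\in\Z$ by \cite{pap:gordon-luecke-integral} and at least one summand is a lens space by \cite{pap:gordon-luecke-complement}, the surgery solid torus $V$ has integral meridian slope, and after an isotopy minimizing $|\widehat{S}_i\cap V|$ we may assume each $\widehat{S}_i$ meets $V$ in a nonempty family of meridian disks, none inessential in the knot exterior. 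Writing $P_i=\widehat{S}_i\cap (S^3\setminus N(K))$ for the resulting planar surfaces, all boundary slopes on $\partial V$ equal $r$, so $\partial P_1$ and $\partial P_2$ are coherently oriented there.

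First I would build the graphs of intersection in the usual way: $G_1$ is the graph on $\widehat{S}_1$ whose fat vertices are the disks $\widehat{S}_1\cap V$ and whose edges are the arc components of $P_1\cap P_2$, and symmetrically $G_2$ on $\widehat{S}_2$; the two graphs share their edge set. The parity rule and the standard labelling conventions apply, and because both $\widehat{S}_i$ are spheres the ambient Euler characteristic is as favorable as possible. The central structural input, which I would establish and then exploit, is the presence of a \emph{great web} in at least one of $G_1,G_2$: a same-orientation subgraph whose valence and vertex count are constrained by a divisibility relation tied to the reducing slope. Running this on a single sphere is precisely what yields the conditional bound $|r|\le b(K)$ of the main theorem, so it suffices to show that three or more summands is impossible outright, rather than only in the complementary slope range.

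The genuinely new ingredient for the full conjecture is to use both spheres simultaneously. Because $\widehat{S}_1$ and $\widehat{S}_2$ are disjoint and non-parallel, each arc of $P_1\cap P_2$ is an edge of \emph{both} graphs, so a great web in $G_1$ forces, through its edges, a corresponding family of faces in $G_2$. I would then combine the divisibility constraint on the great web of $G_1$ with an Euler-characteristic count performed on $\widehat{S}_2$: the divisibility produces so many parallel edge classes that the faces they cut out cannot all embed in the genus-zero surface $\widehat{S}_2$ without creating either an inessential loop or a Scharlemann cycle bounding a disk in $V$, contradicting minimality of the $\widehat{S}_i$. The Scharlemann-cycle case is eliminated as usual from the fact that $V$ is glued to a knot exterior in $S^3$, so the homological obstruction to Scharlemann cycles of length $\ge 2$ applies.

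The step I expect to be the main obstacle is exactly this simultaneous two-web analysis: proving that the divisibility of the great web on one sphere is truly incompatible with the planarity of the other, for \emph{every} hyperbolic $K$ and every integer $r$, with no residual small-slope exceptions. With a single reducing sphere the combinatorics close up cleanly into $|r|\le b(K)$, but with two spheres one must track how the two great webs interleave along $\partial V$ and control the degenerate configurations--short Scharlemann cycles and webs that share all their edges. Establishing unconditionally that this interleaving always over-fills one of the spheres is the crux separating the conditional theorem proved here from the full Two Summands Conjecture, and it is where I would expect the real difficulty to lie.
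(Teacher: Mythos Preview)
The statement you are trying to prove is the Two Summands Conjecture itself, which the paper states as an \emph{open conjecture} and does \emph{not} prove. The paper establishes only the conditional results Theorem~\ref{thm:threesummandsbound} and Corollary~\ref{cor:2sc_positive_braids}. So there is no ``paper's own proof'' to compare against; you are proposing an argument for a problem that remains unsolved.

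More importantly, your proposed approach has a fatal structural gap at the very first step. You take two reducing spheres $\widehat{S}_1,\widehat{S}_2$ in $S^3_r(K)$, note correctly that they can be chosen \emph{disjoint}, and then attempt to build graphs of intersection $G_1,G_2$ from the arcs of $P_1\cap P_2$. But if $\widehat{S}_1$ and $\widehat{S}_2$ are disjoint, then so are $P_1$ and $P_2$, and there are no arcs of intersection at all: your graphs have vertices but no edges. Even if you had not already made the spheres disjoint, both $P_1$ and $P_2$ have boundary slope $r$, so the geometric intersection number $\Delta$ of their boundary slopes is $0$. The great web existence theorem (Theorem~\ref{thm:alltypesorweb}) requires $\Delta>1-\chi(P)/p>0$, so it simply does not apply; and with $\Delta=0$ standard innermost-disk arguments let you remove all intersections anyway. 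There is no combinatorics to run.

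The paper's machinery works precisely because it plays a reducing planar surface $P$ (boundary slope $r$) against a \emph{meridional} planar surface $Q$ coming from thin position in $S^3$ (boundary slope the meridian), so that $\Delta=|r|\ge 2$ and the graphs are genuinely populated. The asymmetry between the $S^3$ side and the $S^3_r(K)$ side is also what makes Scharlemann cycles in $G_P$ forbidden while those in $G_Q$ are expected; in your symmetric two-sphere setup that dichotomy disappears. Finally, you yourself identify the ``simultaneous two-web analysis'' as the place where ``the real difficulty'' lies and do not carry it out, so even on its own terms the proposal is a heuristic outline rather than a proof. As written, the argument does not get off the ground.
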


In this paper, we study reducible surgeries on knots in $S^3$ in both the general setting and this case of many summands.  Suppose $S^3_r(K)$ is reducible and contains more than two prime summands.  Then \cite{pap:valdezsanchez} shows that all but one of the summands is a lens space, and \cite{pap:howiescottwiegold} shows that all but two of the summands are integral homology spheres.  Hence, $S^3_r(K)\cong L_1\# L_2\# Z$, where each $L_i$ is a lens space and $H_1(Z;\mathbb{Z})=0$. Additionally, there is a history of providing bounds on the surgery coefficient in terms of the bridge number $b$ of $K$.  To start, it is a consequence of the standard combinatorial techniques used that $|r|\leq b(b-1)$.  In \cite{pap:sayaribridge} this bound is improved to $|r|\leq (b-1)(b-2)$, and then to $|r|\leq \frac{1}{4}b(b+2)$ in \cite{pap:howiethreesummands}.  Notice that these bounds are all quadratic in the bridge number.  This is a consequence of the fact that they all arise from bounding $|\pi_1(L_i)|$ linearly in $b$, along with the fact that $|r|=|\pi_1(L_1)|\cdot |\pi_1(L_2)|$.  Our main result is the establishing of a linear bound in this three summands case.

\begin{theorem}\label{thm:threesummandsbound} If Dehn surgery of slope $r$ on a knot $K$ in $S^3$ produces a manifold with more than two, and hence three connected summands, then $|r|\leq b$, where $b$ is the bridge number of $K$. Consequently, the Two Summands Conjecture holds for knots with $b\leq 5$.
\end{theorem}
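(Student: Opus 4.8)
The plan is to push the standard graphs-of-intersection machinery one step further by extracting a divisibility property from the connected-sum structure, and then to read off the corollary homologically.

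\emph{Reductions and set-up.} By \cite{pap:thurston3mfldkleinian} I may assume $K$ is hyperbolic, and by \cite{pap:valdezsanchez,pap:howiescottwiegold,pap:gordon-luecke-integral} that $S^3_r(K)\cong L_1\#L_2\#Z$ with $L_1,L_2$ nontrivial lens spaces and $Z$ an integral homology sphere, so that $r\in\Z$ and $|r|=|H_1(S^3_r(K);\Z)|=n_1n_2$, where $n_i:=|H_1(L_i;\Z)|\ge 2$. I would put $K$ in minimal bridge position with respect to a Heegaard sphere $\widehat{Q}$ of $S^3$, so that $\widehat{Q}$ meets $K$ in $2b$ points, and take $\widehat{P}$ to be a system of reducing spheres realising the connected-sum decomposition, isotoped so as to meet the core of the surgery solid torus in a minimal number $p$ of points. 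Removing an open tubular neighbourhood of $K$ turns $\widehat{P}$ and $\widehat{Q}$ into planar surfaces $P$ and $Q$ properly embedded in the exterior $X$, with $\partial P$ of the surgery slope and $\partial Q$ of the meridional slope; after making them transverse, irreducibility of $X$ lets me discard every closed curve of $P\cap Q$, leaving arcs that define the labelled intersection graphs $G_P\subset\widehat{P}$ and $G_Q\subset\widehat{Q}$. The fat vertices of $G_P$ are the $p$ meridian disks of the surgery solid torus and those of $G_Q$ are the $2b$ meridian disks of $N(K)$; their signs alternate along the core and along $K$ respectively, and the usual parity rule ties edge labels to these signs.

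\emph{The great web and its divisibility.} The new ingredient is to record on $G_P$ that each component of $\widehat{P}$ separates a punctured lens space (or punctured homology sphere) from its complement. I would show that this structure concentrates on a distinguished sub-configuration on $\widehat{P}$ --- a \emph{great web} --- consisting of a filling subgraph of $G_P$ together with cycle data, assembled from Scharlemann cycles, that witnesses each of the two lens-space summands. The key lemma, the \emph{divisibility of great webs}, asserts that the relevant size of this configuration --- the number of fat vertices it meets, equivalently $p$, or the common length of its witnessing cycles --- is divisible by $n_1$ and by $n_2$, and hence, since $\gcd(n_1,n_2)=1$, by $n_1n_2=|r|$. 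Heuristically this holds because rebuilding the summand $L(n_i,q_i)$ from the faces of the web on its side forces the web to wrap $n_i$ times around the core of the surgery torus, whereas the homology-sphere summand imposes no such constraint --- which is exactly why only the two lens orders enter. Establishing this rigorously --- locating the great web, controlling its faces, using great Scharlemann cycles to read off each $n_i$, and upgrading divisibility by each $n_i$ to divisibility by the product --- is the main obstacle; here hyperbolicity of $K$ together with minimality of both the intersection with the surgery torus and the bridge position is needed to rule out the degenerate low-complexity webs.

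\emph{Bounding the web, and the corollary.} It then remains to bound the size of the great web from above by $b$. I would transfer the configuration to $G_Q$, which has only $2b$ fat vertices, and run an Euler-characteristic and face count on the sphere $\widehat{Q}$: since $K$ is hyperbolic and in minimal bridge position, the low-complexity faces (bigons, and trigons with prescribed labels) that would otherwise be present are excluded, and the count bounds the number of fat vertices involved --- once the pairing of oppositely signed vertices is taken into account --- by $b$. As $|r|$ divides a positive integer that is at most $b$, we conclude $|r|\le b$. For the consequence: $H_1(S^3_r(K);\Z)\cong\Z/|r|$ is cyclic, while $H_1(L_1\#L_2\#Z)\cong\Z/n_1\oplus\Z/n_2$, so $\gcd(n_1,n_2)=1$; combined with $n_1,n_2\ge 2$ this forces $|r|=n_1n_2\ge 6$, hence $b\ge 6$. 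Therefore, when $b\le 5$ no reducible surgery on $K$ can have more than two prime summands; since such a surgery ($r\ne 0$ on a nontrivial knot) always has at least two, each of them irreducible, the Two Summands Conjecture holds for all knots of bridge number at most $5$.
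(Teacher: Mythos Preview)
Your high-level strategy --- show that $l_1l_2=|r|$ divides the size of a great web, and bound that size by $b$ --- is exactly the paper's. However, you have placed the great web on the wrong graph, and this is not a cosmetic slip: it breaks both halves of the argument.

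The Gordon--Luecke dichotomy (Theorem~\ref{thm:alltypesorweb}) says that either $G_P$ represents all types or $G_Q$ contains a great web. Representing all types forces a lens-space summand in the manifold carrying $\widehat{Q}$; since that manifold is $S^3$, this alternative is excluded and the great web $\Lambda$ lives in $G_Q$, not in $G_P$. There is no mechanism to force a great web onto $\widehat{P}$: the other side of the dichotomy would require torsion in $H_1$ of the manifold carrying $\widehat{P}$, which $S^3_r(K)$ certainly has. Your suggestion that the web meets ``equivalently $p$'' fat vertices therefore has no content, and there is no general inequality $p\le b$ to exploit.

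With $\Lambda\subset G_Q$ the bound is immediate and needs no Euler-characteristic or face count: by definition all vertices of $\Lambda$ have the same sign, there are exactly $q/2$ vertices of each sign in $G_Q$, and the thin-position construction of $Q$ (not merely bridge position) gives $q/2\le b$; hence $v:=|V(\Lambda)|\le b$. The substantive step is Proposition~\ref{prop:webdivisibilty}: one \emph{does} pass to $G_P$, but only to look at the shadow $\Gamma$ of the edges of $\Lambda$ there, and a parity/bipartite count over the Scharlemann regions cut out by $\sigma_i$ in $\widehat{P}_i$ shows that each region sees the same number of interior $\Gamma$-edges at $u_1$ (resp.\ $u_x$), whence $l_i\mid v$. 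Since $\gcd(l_1,l_2)=1$, $|r|=l_1l_2\mid v\le b$. Your final paragraph deriving $|r|\ge 6$ and the $b\le 5$ consequence is fine and matches the paper.
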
 

As a consequence, we complete the proof of the Two Summands Conjecture for positive braid closures.

\begin{corollary}\label{cor:2sc_positive_braids} Let $K$ be the closure of a positive braid in $S^3$.  Then Dehn surgery on $K$ produces at most two prime connected summands.
\end{corollary}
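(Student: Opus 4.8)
The plan is to derive Corollary~\ref{cor:2sc_positive_braids} from Theorem~\ref{thm:threesummandsbound} by combining its upper bound $|r|\le b$ with two facts special to positive braid closures: a \emph{lower} bound on any reducing slope coming from strong quasipositivity, and a lower bound on the Seifert genus in terms of the bridge number. So suppose $K$ is the closure of a positive braid and, toward a contradiction, that $S^3_r(K)$ has more than two---hence exactly three---prime connected summands.

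First I would record what is already available. By the structural results of \cite{pap:valdezsanchez} and \cite{pap:howiescottwiegold} recalled in the introduction, $S^3_r(K)\cong L_1\# L_2\# Z$ with $L_1,L_2$ lens spaces and $H_1(Z;\mathbb{Z})=0$. Since $H_1\big(S^3_r(K);\mathbb{Z}\big)\cong\mathbb{Z}/r\mathbb{Z}$ is cyclic while $H_1(L_1\# L_2\# Z;\mathbb{Z})\cong\mathbb{Z}/|\pi_1 L_1|\oplus\mathbb{Z}/|\pi_1 L_2|$, the orders $|\pi_1 L_1|$ and $|\pi_1 L_2|$ are coprime, and each is at least $2$ since $L_i$ is a genuine lens space. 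Hence $|r|=|\pi_1 L_1|\cdot|\pi_1 L_2|\ge 6$, and with Theorem~\ref{thm:threesummandsbound} this forces $b\ge 6$.

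Now for the two inputs that use that $K$ is a positive braid closure. Such a knot is strongly quasipositive and fibered, so its Seifert genus $g$ coincides with its smooth four-genus and with $\tau(K)$, and the maximal self-linking number of $K$ equals $2g-1$. The crucial claim is that the reducing slope then satisfies $|r|\ge 2g-1$; for a cable knot this is precisely Hedden's criterion that $C_{p,q}(K')$ is strongly quasipositive exactly when $p\ge q\big(2g(K')-1\big)$, the condition that makes the cabling slope $pq$ at least $2g(C_{p,q}(K'))-1$. Secondly, a positive braid closure satisfies $g\ge b-1$: its braid index is realized by a positive braid word in which, by the destabilization criterion, no band generator appears only once (else $K$ would be a connected sum, lowering the braid index), and tracking the crossings of such a word shows that $g$ is at least one less than the braid index, which is in turn at least $b$. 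Granting these, the corollary follows by arithmetic, since
\[
2b-3\ =\ 2(b-1)-1\ \le\ 2g-1\ \le\ |r|\ \le\ b
\]
forces $b\le 3$, contradicting $b\ge 6$; in particular $S^3_r(K)$ cannot have three prime summands.

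The step I expect to be the main obstacle is the lower bound $|r|\ge 2g-1$ on the reducing slope. Its cable prototype is elementary, but because the Cabling Conjecture is not known for positive braid closures one cannot assume that $K$ is a cable, so the bound must be wrung out of the interaction between the positivity of $K$---encoded in $\tau(K)=g(K)$ and in the fibered structure of $K$---and the highly restricted Heegaard Floer homology of the reducible manifold $L_1\# L_2\# Z$. By comparison, the genus--bridge inequality $g\ge b-1$ is a routine point once one knows that the braid index of a positive braid closure is attained by a positive braid.
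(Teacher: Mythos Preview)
Your overall architecture matches the paper's: combine Theorem~\ref{thm:threesummandsbound} with the constraint $r=2g-1$ from \cite{pap:lidman_sivek_contact_reducible} and a comparison between $g$ and $b$ coming from a minimal positive braid word. But the weight of the argument is misplaced, and there is a genuine gap.

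The step you flag as the obstacle, $|r|\ge 2g-1$, is precisely what \cite{pap:lidman_sivek_contact_reducible} supplies (indeed with equality) for hyperbolic positive knots; the paper simply quotes it. The step you call ``routine,'' namely $g\ge b-1$, is where your argument breaks. From a minimal positive braid word on $n$ strands with exponent sum $e$ one has $2g-1=e-n$, and the destabilization argument you cite gives only $e\ge 2(n-1)$. That yields
\[
g \;=\; \tfrac{e-n+1}{2}\;\ge\;\tfrac{n-1}{2},
\]
not $g\ge n-1$. With $n\ge b$ this gives $2g-1\ge b-2$, which together with $|r|\le b$ produces no contradiction. To get $g\ge n-1$ you would need $e\ge 3(n-1)$, i.e.\ each generator occurring on average at least three times, and nothing in your outline establishes this.

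The paper does not prove $g\ge b-1$. Instead it carries out a case analysis on positive braid words with $e\le 2n$ (equivalently $s:=e-2n+2\le 2$), using braid relations and the hypothesis $b\ge 6$ to show that any such word either destabilizes, closes up to a link, or splits as a connected sum---all impossible for a hyperbolic knot on a minimal number of strands. This forces $e\ge 2n+1$, hence $2g-1\ge n+1\ge b+1>|r|$, contradicting $r=2g-1$. So the missing ingredient in your argument is exactly this braid-word analysis; the ``each generator twice'' observation is only the starting point.
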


The proof of Corollary \ref{cor:2sc_positive_braids} relies on the highly nontrivial fact \cite{pap:lidman_sivek_contact_reducible} that hyperbolic positive knots (including the positive braid closures) could only have a single possible reducing slope of $2g-1$.  As we will see,  positive braid closures have a relationship between their genus and their bridge number that not all knots possess.

We remark that the statement regarding knots with $b\leq 5$ in Theorem \ref{thm:threesummandsbound} may already be known to hold in general: the Cabling Conjecture is shown to hold for knots with $b\leq 4$ in \cite{thesis:hoffman}, and in \cite{pap:hoffman_no_SGC} it is claimed that the $b=5$ case has been additionally established in unpublished work.  Another consequence of our work is a reproving of the Cabling Conjecture for knots with $b\leq 3$ (and in a technically quantifiable way, it is ``almost'' established for knots with $b\leq 5$).  See Corollary \ref{cor:primeweb} in Section \ref{sec:prop} for a precise statement.

The techniques of this paper are the so-called graphs of intersection, introduced in \cite{pap:litherland} and \cite{pap:scharlemannfirst} and made famous  with the proof of the knot complement problem \cite{pap:gordon-luecke-complement}.  We combine the application of these techniques to the three summands problem as in \cite{pap:howiethreesummands} with the main technical lemma of \cite{pap:gordon-luecke-complement}, which says that a certain combinatorial object called a \textit{great web} must exist. This is set up in Section \ref{sec:webstructure}, and the proof of Theorem \ref{thm:threesummandsbound} is completed in Section \ref{sec:prop}; there we also state some additional corollaries of a more technical nature which we prove in Section \ref{sec:corollaries}. Section \ref{sec:positivebraids} is devoted to the proof of Corollary \ref{cor:2sc_positive_braids}.

\section*{Acknowledgments}
The author would like to thank Cameron Gordon for his unending assistance and encouragement, Tye Lidman for multiple helpful conversations and for pointing out that I should consider positive braid closures, as well as James Howie for helpful conversation. Additionally, the author is indebted to Cameron Gordon and John Luecke for sharing their proof of a fact which appears here as a part of Proposition \ref{prop:largewebs} with permission, see the discussion before the Proposition for more information.  The author was partially supported by NSF RTG grant DMS-0636643.

\section{Background}\label{sec:webstructure}

We begin by defining a pair of labeled graphs $G_Q$ and $G_P$, and give some basic properties.  The construction that follows first appeared in \cite{pap:howiethreesummands}, but is part of a much more general field of study.  For a survey of the possible situations in which such graphs of intersection can be defined, and for most of the proof of existence of our main combinatorial object, see \cite{pap:Gordon97combinatorialmethods}. 

Since our results apply to both the specific case of obtaining three summands by Dehn surgery on a knot in $S^3$, and the general case of obtaining \textit{any} reducible manifold from a hyperbolic knot in $S^3$, we need to perform both setups, and discuss how they are related.  We will refer to the former as the ``three summands case'', and the latter as the ``general case''.

\subsection{Construction of the graphs $G_Q$ and $G_P$ in the three summands case}

Suppose that $S^3_r(K)\cong L_1\# L_2\# Z$, where $L_i$ is a lens space with $l_i = |\pi_1(L_i)|$, and $Z$ is an integral homology sphere.  For $i=1,2$, let $P_i$ be a planar surface in the exterior of $K$, completing to a reducing sphere $\P_i$ in $S^3_r(K)$ subject to the restrictions:
\begin{enumerate}
\item $\P_i$ separates $L_i$ from $Z$, in the sense that $S^3_r(K)$ cut along $\P_i$ contains two components, with a punctured $L_i$ in one component and a punctured $Z$ in the other, and \item $p_i := |\partial P_i|$ is minimal among spheres with the above property.
\end{enumerate}
Here ``punctured" means the interiors of some finite number of disjoint 3-balls have been removed.  Let $p=p_1+p_2$.  It can be shown using standard techniques that $P_1$ can be chosen to be disjoint from $P_2$.  Let $P = P_1\cup P_2$.  By \cite{pap:gabaifoliations3}, we may find a meridional planar surface $Q$ in the exterior of $K$, completing to a sphere $\Q$ in $S^3$, such that no arc component of $Q\cap P$ is boundary-parallel in $Q$ or in $P$, and so that $\partial Q \cap \partial P$ is minimized.  This allows us to construct a graph $G_Q$ on $\Q$ whose set of (fat) vertices are the disk components of $\Q\ \setminus\ \text{Int}(Q)$ (\textit{i.e.} meridian disks of the filling solid torus) and whose edges are the arc components of $Q\cap P$.  Similarly, we construct graphs $G_P^i$ on $\P_i$, and define $G_P = G_P^1\amalg G_P^2$.  The \textit{faces}, or complementary regions, of $G_Q$ and $G_P$ have boundaries consisting of arcs alternatingly lying on the vertices and the edges of $G_Q$ and $G_P$; these will be called \textit{corners} and \textit{edges}, respectively.  A face will thus be referred to as an $n$-gon if $n$ is the number of edges in its boundary.  Then the choice of $Q$ above translates to the fact that no face of $G_Q$ or $G_P$ is a monogon.  Throughout the paper, any disk of $\Q$ or $\P$ will be assumed to be a union of faces and vertices of $G_Q$ or $G_P$, respectively, and we will not distinguish between such a disk and the graph contained on it (except to be slightly careful in Definition \ref{def:web}).

Label the boundary components of $P$ as $u_1,\ldots, u_p$ so that they occur in order along the boundary torus.  Notice some of these components lie on $P_1$ and the rest lie on $P_2$.  This allows us to assign a pair of labels to each edge of $G_Q$, corresponding to the two vertices of $G_P$ to which the edge is incident.  We will call an oriented edge a $\lambda$\textit{-edge} if it has a label $\lambda$ at its tail.  Since $r$ is an integer, each boundary component of $P$ intersects each boundary component of $Q$ exactly once.  Thus at a single vertex of $G_Q$, one sees these labels occur once in either clockwise or counterclockwise order based on the orientations of $K$ and $Q$; call those vertices \textit{negative} and \textit{positive}, respectively.  Since $\Q$ is separating, precisely $\frac{q}{2}$ vertices of $G_Q$ are positive. Letting $q = |\partial Q|$, label the boundary components of $Q$ as $v_1,\ldots, v_q$, and proceed similarly. Edges can be given a sign as well: an edge of $G_Q$ (respectively, $G_P$) is \textit{positive} if it connects two vertices of $G_Q$ (respectively, $G_P$) of the same sign; otherwise, the edge is \textit{negative}.  By the orientability of all relevant submanifolds, we have the \textit{parity rule}: an arc component of $Q\cap P$ is a positive edge of $G_Q$ if and only if it is a negative edge of $G_P$.  Note that the construction of $Q$ forces $\frac{q}{2}\leq b$ (this is nontrivial, and comes from the use of \textit{thin position} for knots).  We may add elements of $\Z/p\Z$ to labels of $\partial P$ in the obvious way: given a label $\lambda\in\{1,\ldots,p\}$ the label $\lambda + 1$ is either defined or is taken to be $1$; similarly for $\partial Q$.

The technique of graphs of intersection proceeds by deriving combinatorial restrictions on the graphs, which in turn give rise to topological restrictions on the manifolds.  For example, a disk face of a graph whose corners are all $(\lambda, \lambda+1)$ for some label $\lambda$ and whose vertices are all of the same sign is called a \textit{Scharlemann cycle}, and it is well known (see for example \cite{pap:Gordon97combinatorialmethods}) that a Scharlemann cycle face of $G_Q$ gives rise to a lens space summand of $S^3_r(K)$ (in the context we are in, where $\P$ is a union of spheres).  Clearly then no Scharlemann cycle can exist in $G_P$, else there would be a lens space summand in $S^3$.  The highlighted face in Figure \ref{fig:greatweb} is a Scharlemann cycle.

In the three summands case, however, there are two lens space summands of different orders, and hence two different types of Scharlemann cycles may occur on $G_Q$.  To clarify, let $G_i$ be the subgraph of $G_Q$ consisting of all vertices and with edge set given by $G_Q \cap G_P^i$, so that $G_i$ contains only those edges of $G_Q$ which lie on $G_P^i$.  Then we have the following properties, which essentially say that the graph $G_Q$ is well-behaved.

\begin{proposition}[\cite{pap:howiethreesummands}]\label{lem:howiescsarenice} Suppose $\sigma_i$ is a Scharlemann cycle of $G_i$, so that its labels are $x$ and $y$ in $G_Q$.  Then \begin{enumerate}
\item $y=x+1$, \textit{i.e.} there are no edges of $G_{3-i}$ in $\sigma_i$, so that it is a genuine Scharlemann cycle of $G_Q$,
\item $\sigma_i$ contains exactly $l_i$ edges in its boundary, and
\item if $\sigma_i'$ is another Scharlemann cycle on $G_i$, then its corners are also labeled $(x,x+1)$.
\end{enumerate}
\end{proposition}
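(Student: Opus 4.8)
The plan is to exploit the interplay between the two reducing spheres $\P_1$ and $\P_2$ and the fact that an innermost Scharlemann cycle produces a very controlled piece of topology. First I would recall the basic mechanism: a Scharlemann cycle face $\sigma_i$ of $G_i$ has all of its edges lying on $G_P^i$, hence its boundary sits on $\P_i$, and the associated $2$-handle/annulus construction along $\P_i$ produces a punctured lens space whose fundamental group has order equal to the number of edges of $\sigma_i$ divided by\,\dots more precisely, the standard argument shows the Scharlemann cycle exhibits a lens space summand of the piece of $S^3_r(K)$ on one side of $\P_i$. Since $\P_i$ was chosen to separate the summand $L_i$ (with $|\pi_1(L_i)| = l_i$) from $Z$, and since $Z$ is a homology sphere with no lens space summand, the lens space produced must be (a connected summand inside) $L_i$ itself, forcing its fundamental group to have order exactly $l_i$. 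This gives part (2) once one knows $\sigma_i$ is a genuine Scharlemann cycle of $G_Q$; so the logical heart is really part (1).

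For part (1), I would argue as follows. The labels of $\sigma_i$ in $G_Q$ are $x,y$, but a priori the corners could be $(x,x+1),(x+1,x+2),\dots,(y-1,y)$ — that is, $\sigma_i$ need only be a Scharlemann cycle \emph{within the subgraph} $G_i$, so consecutive labels of $G_Q$ lying strictly between $x$ and $y$ correspond to boundary components of $P$ lying on $P_{3-i}$ (giving edges of $G_{3-i}$). Suppose for contradiction that $y\neq x+1$. Then the boundary curve of the Scharlemann disk $\sigma_i$ on $\P_i$ passes over meridian disks of the filling torus labelled $x$ and $y$, and the band-sum/surgery argument along $\sigma_i$ still produces an essential sphere or lens space summand, but now one whose construction interacts with the intervening labels in $P_{3-i}$. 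The contradiction should come from the minimality conditions on $p_i$ (condition (2) in the construction): the extra intervening boundary components on $P_{3-i}$ can be used to do an isotopy/compression reducing $|\partial P_{3-i}|$, or else one finds a lens space summand on the ``wrong'' side of $\P_{3-i}$ (on the $Z$ side), contradicting that $Z$ is a homology sphere. I would set this up by looking at how $\sigma_i$ meets the curves $u_j$ with $x<j<y$ and running the standard ``Scharlemann cycle $\Rightarrow$ cabling/lens-space'' machinery relative to $\P_{3-i}$.

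Part (3) I expect to be the easiest: given two Scharlemann cycles $\sigma_i,\sigma_i'$ on $G_i$, each (by part (1)) is a genuine Scharlemann cycle of $G_Q$ with labels of the form $(x,x+1)$ and $(x',x'+1)$ respectively, and each produces a lens space summand of the $L_i$ side of $\P_i$. If the label pairs differed, one would be stitching together two distinct lens-space-producing annuli on the same side of $\P_i$; since that side is a punctured $L_i$, a punctured lens space, its prime decomposition is forced, and a standard homology/fundamental-group count (as in the uniqueness arguments of \cite{pap:gordon-luecke-complement}) shows the two slopes of the Scharlemann cycles must coincide, i.e.\ $x'\equiv x \pmod p$. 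The main obstacle is part (1): carefully extracting the contradiction with the minimality of $p_{3-i}$ (or with $H_1(Z)=0$) from a Scharlemann cycle of $G_i$ that is not a Scharlemann cycle of $G_Q$ — this requires a delicate tracing of how the disk $\sigma_i$ sits relative to \emph{both} surfaces $P_1$ and $P_2$ simultaneously, which is exactly the kind of ``two reducing spheres at once'' bookkeeping that makes the three-summands case harder than the classical one.
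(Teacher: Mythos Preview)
The paper does not prove this proposition at all: it is stated with the attribution \cite{pap:howiethreesummands} and no argument is given, so there is no ``paper's own proof'' to compare against. What you have written is therefore a proposed proof of a result the paper simply imports from Howie.

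As a sketch toward Howie's result your outline is in the right neighbourhood, but part~(1) is not yet an argument. You correctly identify the difficulty --- a Scharlemann cycle of $G_i$ is only a face of the subgraph $G_i$, and its interior may be crossed by edges of $G_{3-i}$ --- but your proposed resolution (``the extra intervening boundary components on $P_{3-i}$ can be used to do an isotopy/compression reducing $|\partial P_{3-i}|$, or else one finds a lens space summand on the wrong side'') is a hope, not a mechanism. You have not said what compression or isotopy you would perform, nor why the lens space produced by the Scharlemann construction lands on the $Z$-side of $\P_{3-i}$ rather than the $L_{3-i}$-side. The actual obstruction in Howie's argument is more structural: one analyses how the corners of $\sigma_i$ (arcs on $\partial N(K)$ running from label $x$ to label $y$) interact with the intervening curves of $\partial P_{3-i}$, and uses this to build a surface contradicting the minimality of $p_i$ or $p_{3-i}$. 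Your sketch does not engage with the corners at all.

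For part~(2) your reasoning is also incomplete: you assert that the lens space produced ``must be $L_i$ itself'' because $Z$ is a homology sphere, but one side of $\P_i$ is a punctured $L_{3-i}\#Z$, not a punctured $Z$, so a lens space summand on that side is not immediately absurd. One needs either to control on which side of $\P_i$ the Scharlemann construction deposits the lens space, or to invoke minimality of $p_i$ to rule out the wrong side. Part~(3) is likewise only gestured at; ``a standard homology/fundamental-group count'' is not a proof, and the uniqueness of Scharlemann labels in this two-sphere setting genuinely requires the minimality hypotheses on $p_1,p_2$.
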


Define the \textit{length} of a Scharlemann cycle to be the number of edges in its boundary.  Relabel the boundary components of $P$ so that the corners of any length $l_1$ Scharlemann cycle are labeled (1,2).  Let $(x,x+1)$ denote the corners of any length $l_2$ Scharlemann cycle.  Let $$L=\{3,\ldots,x-1,x+2,\ldots,p\},$$ and call $L$ the set of \textit{regular} labels.  The set of \textit{Scharlemann} labels is thus $\{1,2,x,x+1\}$.

\subsection{Construction of $G_P$ and $G_Q$ in the general case}
In the general case, one begins by defining $P$ to be a planar surface completing to a reducing sphere as before, but now subject to global minimality: $|\partial P|$ is minimal among all planar surfaces completing to reducing spheres in $S^3_r(K)$.  Then $Q$ is defined analogously, as are $G_Q$, $G_P$, the labels and notions of positivity and negativity of vertices and edges, the parity rule, etc.  In the general case, there is exactly one pair of labels (say, $1$ and $2$) and one length (say, $l$) for all the Scharlemann cycles of $G_Q$ \cite{pap:gordon-luecke-distance}, so the regular labels are $L=\{3,4,\ldots,p\}$. In fact, it is clear that in the three summands case, if without loss of generality $p_1\leq p_2$, then removing $P_2$ from the discussion and proceeding analogously brings us to the general case.  

\subsection{Great webs}
In \cite{pap:gordon-luecke-complement} a subgraph called a great web is defined and it is shown that sufficiently large great webs contain Scharlemann cycles.  This is integral to the proof of the knot complement problem, as in that context the two surfaces are both spheres in distinct copies of $S^3$, so the existence of a lens space summand in either is impossible.  

\begin{definition}\label{def:web} A \textnormal{great $k$-web} is a subgraph $\Lambda$ of one of the graphs defined above ($G_Q$, say) satisfying the following properties: 
\begin{enumerate}
\item The graph $\Lambda$ lies in a disk $D_\Lambda$ of $\widehat{Q}$ such that every vertex in $D_\Lambda$ is the same sign and is a vertex of $\Lambda$, and 
\item With precisely $k$ total exceptions, every edge incident to any vertex of $\Lambda$ is an edge of $\Lambda$, i.e. has both of its endpoints at a vertex of $\Lambda$.  These exceptional edges are called $\lambda$\textnormal{-ghosts}, where $\lambda$ is the label of the edge at the vertex in $\Lambda$. 
\end{enumerate}
\end{definition}

See Figure \ref{fig:greatweb}. In practice, we do not differentiate between $\Lambda$ and $D_\Lambda$.  Let us say that two surfaces $R$ and $S$ in a knot exterior \textit{intersect essentially} if one can construct graphs $G_R$ and $G_S$ analogous to above which satisfy all the non-triviality requirements described, see \cite{pap:Gordon97combinatorialmethods} for the most general setup.  The following is the main combinatorial result of \cite{pap:gordon-luecke-complement}, where a notion of \textit{representing a type} is defined and used to arrive at the existence of great webs.

\begin{theorem}[Gordon-Luecke \cite{pap:gordon-luecke-complement}]\label{thm:alltypesorweb}
Suppose $Q$ is a connected planar surface, $P$ is a possibly disconnected planar surface, and $Q$ and $P$ intersect essentially.  Let $p=|\partial P|$, and assume that $\Delta > 1-\frac{\chi(P)}{p}$, where $\Delta$ is the geometric intersection number between the slopes given by $\partial Q$ and $\partial P$.  Then either $G_P$ represents all types or $G_Q$ contains a great $(p - \chi(\P))$-web.
\end{theorem}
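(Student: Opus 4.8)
\emph{The plan.} We prove the dichotomy in its implicational form: assuming that $G_P$ does not represent all types, we produce a great $(p-\chi(\P))$-web in $G_Q$. The first step is bookkeeping with the labels. Since the components of $\partial P$ are mutually parallel, the $p\Delta$ points of $\partial Q\cap\partial P$ lying on a single component of $\partial Q$ occur in $\Delta$ consecutive blocks, each block carrying the labels $1,2,\ldots,p$ once in cyclic order; hence every vertex of $G_Q$ has valence exactly $p\Delta$ and, dually, every vertex of $G_P$ has valence $q\Delta$. Rewriting the hypothesis via $\chi(P)=\chi(\P)-p$ yields the equivalent inequality $p(\Delta-1)>p-\chi(\P)$, so the ``surplus valence'' $p\Delta-p$ at a vertex of $G_Q$ strictly exceeds the prospective web constant $k:=p-\chi(\P)$. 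This inequality, together with the fact that $\Q$ is a $2$-sphere, is what powers the extremal count below.

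The second step extracts structure from the omitted type. Representing a type is a statement about which local edge-configurations occur at the vertices of $G_P$ (see \cite{pap:gordon-luecke-complement} for the precise definition); its negation yields a label, or a bounded set of labels, $\mathcal{X}\subseteq\{1,\ldots,q\}$ whose prescribed configuration occurs at no vertex of $G_P$. Combining this with the parity rule and the block structure of the labels, the omission of $\mathcal{X}$ at every vertex of $G_P$ forces a uniform restriction on the edges of $G_Q$ carrying the dual labels: at each vertex, these edges cannot run to a vertex of the opposite sign, so they are confined to same-sign subdisks of $\Q$. In effect $G_Q$ becomes ``locally monochromatic'' away from $k$-many exceptions, which is precisely the shape of a great web.

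The third and hardest step converts this local confinement into an honest great web. Passing to the reduced graph of $G_Q$ (identifying parallel edges) when convenient, work on $\Q=S^2$ and consider the positive vertices of $G_Q$ — there are $q/2$ of them — together with the edges between them. Using minimality of $|\partial Q\cap\partial P|$, so that no face of $G_Q$ is a monogon and no edge forms a trivial loop, choose an innermost disk $D_\Lambda\subseteq\Q$ bounded by confined edges whose interior contains only positive vertices, and let $\Lambda$ be the union of the vertices and faces of $G_Q$ inside $D_\Lambda$; the surplus-valence inequality $p(\Delta-1)>k$ ensures $\Lambda$ can be taken nonempty. A standard Euler-characteristic count on $\Q$, using the no-monogon condition to bound from below the number of edges of $G_Q$ with both endpoints in $\Lambda$, then bounds the number of edges incident to $\Lambda$ but not contained in it — the $\lambda$-ghosts — by $k=p-\chi(\P)$. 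Since by construction every vertex of $D_\Lambda$ has the same sign and is a vertex of $\Lambda$, the subgraph $\Lambda$ is a great $(p-\chi(\P))$-web, establishing the dichotomy.

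The main obstacle is precisely this last step: arranging simultaneously that $D_\Lambda$ is monochromatic in the sign of its vertices \emph{and} that the ghost count is at most $p-\chi(\P)$, rather than merely bounded by some larger multiple of it. This is where the global minimality built into the choices of $Q$ and $P$ is indispensable — bigons and other low-complexity faces would otherwise spawn extra ghosts or wreck the innermost structure — and where care is needed when $\P$ is disconnected, since then $\chi(\P)$ can exceed $2$ and the complement of $D_\Lambda$ in $\Q$ may itself split into several disks whose contributions must be tallied separately. The orientation bookkeeping that pins the number of positive vertices at $q/2$ enters here as well; given the block structure of the labels these points are technical but routine once the extremal argument is in place.
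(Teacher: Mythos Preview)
The paper does not supply its own proof of this theorem: it is stated with attribution to Gordon--Luecke \cite{pap:gordon-luecke-complement} and treated as a black box, the only remark being that the disconnected-$P$ case ``must be carefully checked.'' So there is no in-paper argument to compare your proposal against; the relevant benchmark is the original Gordon--Luecke proof.

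Measured against that, your proposal is an outline with a genuine gap at its heart. Step~2 is where the content of ``types'' lives, and you do not engage with it. In Gordon--Luecke's setting a \emph{type} is a sign function $\tau:\{1,\ldots,p\}\to\{+,-\}$, and $G_P$ represents $\tau$ if a particular labelled-tree pattern (built from $\tau$) embeds in $G_P$. The negation ``$G_P$ does not represent $\tau$'' is not the same as ``some label configuration occurs at no vertex of $G_P$''; it is a global non-embedding statement, and passing from it to a constraint on same-sign edges in $G_Q$ is the substance of the Gordon--Luecke index argument, not a consequence of the parity rule plus label blocks. Your sentence ``these edges cannot run to a vertex of the opposite sign'' is exactly the conclusion that requires the machinery, and you have asserted it rather than derived it.

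Step~3 compounds this. You invoke an ``innermost disk bounded by confined edges,'' but until Step~2 is done there are no confined edges to be innermost with respect to. You also lean on ``precisely $q/2$ positive vertices,'' which in the paper's application holds because $\Q$ is a separating sphere in $S^3$; the theorem as stated makes no such hypothesis on $Q$, so that count is unavailable in general. Finally, you yourself flag the Euler-characteristic bound on ghosts as ``the main obstacle'' and then do not carry it out; in the actual proof this is handled by an index computation on the faces of $G_Q$, summing contributions over corners and using $\chi(\Q)=2$, and the bound $p-\chi(\P)$ (rather than a cruder multiple) comes out only after that computation is done carefully. As written, your proposal is a plausible table of contents for the Gordon--Luecke argument, but the two load-bearing steps are not present.
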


It should be noted that the statement of this result in \cite{pap:gordon-luecke-complement} does not \textit{a priori} allow $P$ to be disconnected.  This must be carefully checked.  It follows from \cite{pap:parry} and \cite{pap:gordon-luecke-complement} that $G_P$ representing all types implies that the manifold containing $\widehat{Q}$ contains a summand with nontrivial torsion in its first homology.  In our context, this manifold is $S^3$, so we find that $G_Q$ must contain a great $(p-2)$-web in the general case, or a great $(p-4)$-web in the three summands case.  Throughout the remainder of the paper, all great webs will be assumed to be one of these two options, depending on the relevant case, and we will suppress the integer $k$ in the definition.

\begin{lemma} [Gordon-Luecke]\label{lem:webstruc} Let $\Lambda$ be a great web in either case.\begin{enumerate}
\item There is exactly one $\lambda$-ghost for each $\lambda\in L$.  
\item In the general case, $\Lambda$ contains a length $l$ Scharlemann cycle $\sigma$; in the three summands case, $\Lambda$ contains a length $l_1$ Scharlemann cycle $\sigma_1$ and a length $l_2$ Scharlemann cycle $\sigma_2$.\end{enumerate}
\end{lemma}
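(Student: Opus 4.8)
The plan is to import the great-web analysis of \cite{pap:gordon-luecke-complement}, which handles the general case essentially verbatim, and to upgrade it to the three summands case by passing to the subgraphs $G_1,G_2$ and appealing to Proposition \ref{lem:howiescsarenice} to control the Scharlemann-cycle lengths. Throughout, one uses the fact recorded above that, $r$ being an integer, each vertex of $G_Q$ meets every label $1,\ldots,p$ exactly once and so has degree $p$.

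For (1), I would first pin down the number of ghosts: by Theorem \ref{thm:alltypesorweb} together with the homological remark following it, $\Lambda$ is a great $(p-\chi(\widehat{P}))$-web, so it has exactly $p-\chi(\widehat{P})$ ghosts; and since the set of Scharlemann labels has cardinality $\chi(\widehat{P})$---it is $\{1,2\}$ in the general case and $\{1,2,x,x+1\}$ in the three summands case---this number equals $|L|$. It therefore suffices to show that no ghost carries a Scharlemann label and that distinct ghosts carry distinct labels: the first fact confines the labels of the $|L|$ ghosts to the $|L|$-element set $L$, and the second makes this confinement a bijection. The first fact is where the minimality in the choice of $P$ (resp.\ of $P_1,P_2$) is used---a ghost carrying a Scharlemann label would, by a standard exchange argument, yield an isotopy lowering $|\partial P|$ (resp.\ $|\partial P_i|$), contradicting minimality. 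The second is the structural feature of the Gordon-Luecke construction that the ghosts meet $\partial D_\Lambda$ in the cyclic order induced by that of the labels; since this is a property of how $\Lambda$ is extracted in a neighborhood of $D_\Lambda$, it is insensitive to whether $\widehat{P}$ is connected.

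For (2), the general case is the main combinatorial output of \cite{pap:gordon-luecke-complement}, which I would simply quote: an Euler-characteristic count for $\Lambda$ in the disk $D_\Lambda$, using that $\Lambda$ has $(pV-|L|)/2$ edges on $V$ vertices of degree $p$, exhibits a vertex or face of small complexity, hence a large family of mutually parallel positive edges of $G_Q$, and such a family contains a Scharlemann cycle $\sigma$---of length $l$, the common length of all Scharlemann cycles of $G_Q$ in the general case. In the three summands case, let $\Lambda_i$ be the subgraph of $\Lambda$ obtained by keeping all of its vertices but only the edges lying on $\widehat{P}_i$, regarded as a subgraph of $G_i$ inside the same disk $D_\Lambda$. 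Every vertex of $D_\Lambda$ is then a vertex of $\Lambda_i$ of one sign, and by (1) the edges incident to those vertices that lie on $\widehat{P}_i$ but not in $\Lambda$---the ghosts of $\Lambda_i$---are precisely the ghosts of $\Lambda$ carrying the regular labels situated on $P_i$, of which, by the labeling conventions around Proposition \ref{lem:howiescsarenice}, there are $p_i-2=p_i-\chi(\widehat{P}_i)$. Hence $\Lambda_i$ is a great web of $G_i$---equivalently, we may treat the pair $(Q,P_i)$ as a general-case situation---and the general case, applied to $\Lambda_i$, yields a Scharlemann cycle of $G_i$ inside $\Lambda_i$; Proposition \ref{lem:howiescsarenice} then identifies this with a genuine Scharlemann cycle $\sigma_i$ of $G_Q$ of length $l_i$.

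I expect the crux to be neither the counting in (1) nor the reduction in (2) but the faithful transfer of \cite{pap:gordon-luecke-complement} to the present setting: checking that both its construction of the great web and its argument that a sufficiently large great web contains a Scharlemann cycle survive $\widehat{P}$ being disconnected (flagged above), and confirming that $\Lambda_i$ genuinely satisfies the great-web hypotheses---in particular that the $G_{3-i}$-edges lying around $\Lambda_i$, which could \emph{a priori} create monogon faces of $G_i$, do not interfere---so that the $l_1$- and $l_2$-Scharlemann cycles may be produced independently.
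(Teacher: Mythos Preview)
Your overall strategy diverges from the paper's in an instructive way, and one of your steps has a real gap.

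The paper proves both parts with a single device already set up in the text: great $\lambda$-cycles together with Hoffman's result that $G_Q$ contains no strict great cycle. If a label $\lambda$ has no ghost, then following $\lambda$-edges inside $\Lambda$ (the parity rule guarantees the outgoing $\lambda$-edge at each vertex is distinct from the incoming one) produces a great $\lambda$-cycle, which by Hoffman must be a Scharlemann cycle; hence $\lambda$ is a Scharlemann label. Contrapositively, every regular label has at least one ghost, and since the total number of ghosts equals $|L|$, each regular label has exactly one. This is (1). For (2), the same construction applied to the now ghost-free Scharlemann labels $1$ (and, in the three summands case, $x$) directly exhibits Scharlemann cycles in $\Lambda$, and Proposition~\ref{lem:howiescsarenice} pins down their lengths. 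No Euler-characteristic count, no splitting into $\Lambda_1,\Lambda_2$, no appeal to the cyclic order of ghosts is needed.

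Your route instead tries to show (a) no ghost carries a Scharlemann label and (b) distinct ghosts carry distinct labels, and then to deduce (2) by an Euler-characteristic argument on $\Lambda$ (and on $\Lambda_i$). Step (b) is true, but as the paper explicitly remarks just after the statement of the lemma, it ``involves a bit more work to prove and won't be necessary''; you are importing something harder than what you need. More seriously, your justification of (a)---that a ghost with a Scharlemann label would, via a ``standard exchange argument,'' permit an isotopy reducing $|\partial P|$---is not correct as stated. A ghost is merely an edge of $G_Q$ leaving $D_\Lambda$ with a particular label at its interior endpoint; there is nothing in that local picture that produces a compressing or boundary-parallel configuration on $P$, and minimality of $|\partial P|$ plays no role at this point in the paper. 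Without (a) your pigeonhole does not close: distinctness of ghost labels among $p$ labels does not force them into $L$.

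Your plan for (2) via $\Lambda_i\subset G_i$ could in principle be made to work, but note that it depends on (1) (to count the ghosts of $\Lambda_i$) and on re-verifying the great-web combinatorics for $G_i$, including the monogon issue you yourself flag. The paper avoids all of this by reusing the great-$\lambda$-cycle construction from (1) at the ghost-free labels.
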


It turns out that quite a bit more can be said about the structure of $\Lambda$.  For example, the labeled edges traveling along $\partial \Lambda $ are completely understood, and the ghosts occur in (labeled) order, however it involves a bit more work to prove and won't be necessary for us.  To prove the lemma, we need to define the notion of a \textit{great $\lambda$-cycle}, for $\lambda$ a label in $\{1,\ldots,p\}$.  A great $\lambda$-cycle is a disk $\Delta$ of $\Q$ such that all vertices in $\Delta$ are the same sign, and that $\partial \Delta$ may be oriented clockwise or counterclockwise to be a $\lambda$-edged cycle.  In \cite{thesis:hoffman} (see also \cite{pap:hoffman_no_SGC}) it is shown that if $R$ and $S$ intersect essentially, for $\widehat{R}\cong S^2$ in $S^3$ and $\widehat{S}\cong S^2$ in a reducible manifold, then $G_R$ cannot contain a great cycle which is not a Scharlemann cycle. These are called \textit{strict}, or sometimes \textit{new} great cycles.  It is routine to verify that in the three summands case, this result still holds to say that $G_Q$ cannot contain a strict great cycle.

\begin{proof}  Let $\lambda$ be a label in $\{1,\ldots,p\}$, and suppose there is no $\lambda$-ghost.  Then every edge with a label $\lambda$ at a vertex of $\Lambda$ is an edge of $\Lambda$. Begin at a vertex of $\Lambda$, and construct a $\lambda$-edged path.  This may be accomplished by the parity rule, since the edges of $\Lambda$ are certainly positive and so must have a pair of distinct labels, \textit{i.e.} the incoming edge to a vertex doesn't ``use up" the label $\lambda$ at that vertex.  This path eventually gives rise to a great $\lambda$-cycle, and as such must be a Scharlemann cycle.  Hence $\lambda$ is a Scharlemann label.  Thus the remaining $|L|$ labels must all have exactly one ghost.  In the three summands case, suppose without loss of generality that $\lambda=1$, repeat the process for $x$, then apply Proposition \ref{lem:howiescsarenice}.
\end{proof}

\section{Divisibility of Great Webs}\label{sec:prop}

Our main technical result is the following.  Let $v=|V(\Lambda)|$, for $\Lambda$ a great web in either case.

\begin{proposition}\label{prop:webdivisibilty} In the general case, $l$ divides $v$; in the three summands case, $l_i$ divides $v$ for $i=1,2$. 
\end{proposition}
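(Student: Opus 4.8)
The plan is to set up a counting argument on the great web $\Lambda$, weighting vertices by the labels they carry and exploiting the fact established in Lemma~\ref{lem:webstruc} that, for each regular label $\lambda \in L$, there is exactly one $\lambda$-ghost. The starting observation is that at each vertex $w$ of $\Lambda$ and for each label $\lambda \in \{1,\dots,p\}$, there is precisely one edge-end at $w$ carrying the label $\lambda$ (this is the content of $r$ being an integer, so that $\partial P$ meets $\partial Q$ once per component, combined with how labels are assigned at vertices of $G_Q$). In particular, summing over all vertices of $\Lambda$, the total number of label-$\lambda$ edge-ends incident to $\Lambda$ is exactly $v$. Of these $v$ edge-ends, all but one belong to genuine edges of $\Lambda$ when $\lambda$ is regular — the single exception being the lone $\lambda$-ghost — and every genuine edge of $\Lambda$ contributes two edge-ends with (generically) two distinct labels by the parity rule.

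The key step is then to track, for a fixed $i$, the sub-behavior at the Scharlemann labels $\{1,2\}$ or $\{x,x+1\}$ corresponding to $l_i$. Focusing say on the pair $(x,x+1)$ (the pair $(1,2)$ is handled identically): consider the edges of $\Lambda$ that carry one of these two labels at one of their ends. I would argue that the component of $\Lambda$'s edge set restricted to label-$x$-or-$x+1$ edge-ends organizes into great $x$-cycles and great $(x{+}1)$-cycles — one builds an $x$-edged path as in the proof of Lemma~\ref{lem:webstruc}, which, since there is no $x$-ghost and no $(x{+}1)$-ghost (these being Scharlemann labels, by Lemma~\ref{lem:webstruc}(1) every regular label is accounted for by its unique ghost, leaving the Scharlemann labels ghost-free), must close up into great cycles, and each such great cycle is a Scharlemann cycle with corners $(x,x+1)$ of length exactly $l_i$ by Proposition~\ref{lem:howiescsarenice} together with the no-strict-great-cycle fact for $G_Q$. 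Each length-$l_i$ Scharlemann cycle on labels $(x,x+1)$ uses exactly $l_i$ of the label-$x$ edge-ends (one per edge in its boundary) and exactly $l_i$ of the label-$(x{+}1)$ edge-ends, and its boundary passes through exactly $l_i$ vertices of $\Lambda$. The claim to nail down is that every vertex of $\Lambda$ lies on exactly one such Scharlemann cycle, so that $v$ is partitioned into blocks of size $l_i$, giving $l_i \mid v$.

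To make that partition claim precise I would use a disk/innermost argument: the great $x$-cycles and $(x{+}1)$-cycles are disks in $\widehat{Q}$ lying inside $D_\Lambda$, and two distinct such cycles are either nested or disjoint; an innermost-disk analysis (the interior of an innermost great $x$-cycle can contain no vertex carrying a free label-$x$ end, since that end would have to escape, contradicting ghost-freeness of $x$) forces them to be disjoint with union all of $V(\Lambda)$, and one checks the $x$-cycles and the $(x{+}1)$-cycles induce the same partition because consecutive edges around a Scharlemann cycle alternate their orientations' labels between $x$ and $x+1$. I expect the main obstacle to be exactly this last bookkeeping — verifying rigorously that the great cycles on the Scharlemann labels tile all vertices of $\Lambda$ without leftover, rather than merely covering a subset — which is where the hypotheses ``great web'' (controlling stray edges via the ghost count) and the minimality built into $P$ (ruling out strict great cycles, so every great cycle really is a clean length-$l_i$ Scharlemann cycle) both get used. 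Once the tiling is in hand the divisibility is immediate, and the general case is just the $i$-less specialization with the single pair $\{1,2\}$ and length $l$.
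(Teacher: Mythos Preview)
Your setup is sound: the Scharlemann labels carry no ghosts, so from any vertex of $\Lambda$ one can run an $x$-edged path indefinitely, and the no-strict-great-cycle fact forces every great $x$-cycle in $\Lambda$ to be a genuine Scharlemann cycle of length $l_i$ with corners $(x,x+1)$. Since Scharlemann cycles are faces and two of them with the same corner labels cannot share a vertex, you do get a disjoint collection of $l_i$-blocks.

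The gap is the tiling claim. Your innermost argument shows that an innermost great $x$-cycle has no interior vertex---indeed it is a face---but this says nothing about vertices lying \emph{outside} every such Scharlemann cycle. Concretely: from a vertex $w$ not on any Scharlemann cycle, the $x$-path $w=w_0\to w_1\to\cdots$ must eventually produce a great $x$-cycle $C$, but it may do so by first arriving at some $w_m\in C$ along an edge whose label at $w_m$ is neither $x$ nor $x+1$; the labels $x$ and $x+1$ at $w_m$ are already occupied by the two edges of $C$, so the incoming edge uses a third label, and nothing prevents this. The path then merges into $C$ while $w_0,\ldots,w_{m-1}$ lie on no Scharlemann cycle at all. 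Translated to $G_P$, your tiling claim is exactly the assertion that every $\Gamma$-edge incident to $u_x$ has its other endpoint at $u_{x+1}$, and there is no reason this should hold; the ghost count and the ban on strict great cycles do not force it.

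The paper avoids this by passing to $G_P$. It forms the subgraph $\Gamma\subset G_P$ with edge set $E(\Lambda)$, so that regular vertices have $\Gamma$-valence $v-1$ and Scharlemann vertices have $\Gamma$-valence $v$. The $\Gamma$-edges joining $u_1$ to $u_2$ cut $\widehat P$ into regions; because every $\Gamma$-edge is negative in $G_P$, the graph $\Gamma$ is bipartite, and counting interior edges of each region by positive versus negative endpoints modulo $v-1$ shows each region meets $u_1$ and $u_2$ in the same number of interior $\Gamma$-edges. Running the interior edges over the $1$-handle between $u_1$ and $u_2$ then shows the Scharlemann regions of a fixed Scharlemann cycle $\sigma$ are cyclically matched, forcing a common value $n$ and giving $(1+n)\,l=v$. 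No partition of $V(\Lambda)$ by Scharlemann cycles is ever asserted or needed.
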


To prove Proposition \ref{prop:webdivisibilty} as well as Corollary \ref{cor:noPSD}, we use the following construction.  Let $\Gamma$ denote the subgraph of $G_P$ consisting of all the vertices of $G_P$ along with those edges of $G_P$ which in $G_Q$ are the edges of $\Lambda$:
\begin{align*}
V(\Gamma) &= V(G_P), \text{ and} \\
E(\Gamma) &= E(\Lambda).
\end{align*}
See Figures \ref{fig:greatweb} and \ref{fig:gamma} for examples of $\Lambda$ and $\Gamma$, respectively.  Note that $E(\Gamma)$ does not contain (those edges of $G_P$ corresponding to) ghost edges of $\Lambda$.  Let the \textit{regular} and \textit{Scharlemann} vertices of $\Gamma$ be those whose label corresponds to regular and Scharlemann labels, respectively.  Then by Lemma \ref{lem:webstruc} the valence of a regular vertex of $\Gamma$ is $v-1$, and the valence of a Scharlemann vertex of $\Gamma$ is $v$.  We will refer to the edges $E(\Gamma)$ as $\Gamma$\textit{-edges}.  It should be noted that no $\Gamma$-edges are adjacent to one another at any vertex when considered as edges of $G_P$; in particular, both labels of each $\Gamma$-edge correspond to vertices of $\Lambda$, and are hence the same sign, so because $\Q$ is separating there must at the very least be an edge with a label corresponding to a vertex of opposite sign between them.  In fact, there are an odd number of edges of $G_P$ between any pair of $\Gamma$-edges.

\begin{figure}[tb]
\begin{center}
\begin{subfigure}{.49\textwidth}
\begin{center}
\includegraphics[width = .75\textwidth]{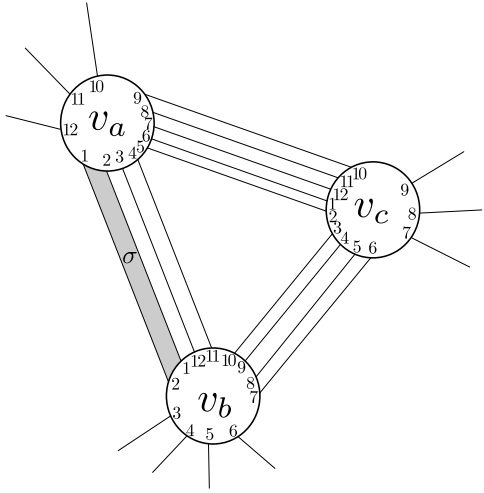}
\caption{A great 10-web $\Lambda$ on $G_Q$ for $p=12$ in the general case}
\label{fig:greatweb}
\end{center}
\end{subfigure}
\begin{subfigure}{.49\textwidth}
\begin{center}
\includegraphics[width = .75\textwidth]{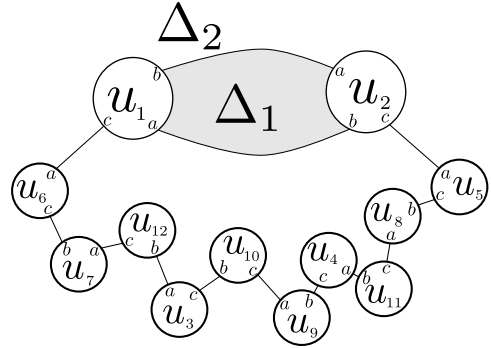}
\caption{The graph $\Gamma$ on $G_P$ for the great web $\Lambda$ pictured in Figure \ref{fig:greatweb}.  Here, $n_1(\Delta_1)=0$, while $n_1(\Delta_2)=1$, because of the location of the $c$-edge at $u_1$.}
\label{fig:gamma}
\end{center}
\end{subfigure}
\caption{The graphs $\Lambda$ and $\Gamma$.  The fact that all the regular vertices of $\Gamma$ are in a single Scharlemann region (namely $\Delta_2$) is a phenomenon of $p$ being quite small, and does not occur in general.}
\end{center}
\end{figure}

\begin{proof}[Proof of Proposition \ref{prop:webdivisibilty}]
Let $E$ denote the $\Gamma$-edges which are incident to two Scharlemann vertices. Thus the edges of $E$ are incident to the vertices $u_1$ and $u_2$ in the general case, and one of the pairs $u_1$ and $u_2$ or $u_x$ and $u_{x+1}$ in the three summands case.  For clarity we will restrict our attention to the edges connecting the vertices $u_1$ and $u_2$, and show that $l$ or $l_1$ divide $v$; the proof of $l_2$ dividing $v$ is the same.  Additionally, for clarity we suppress the difference between the notation in the two cases, namely $\sigma$ and $\sigma_1$, $l$ and $l_1$, and $P$ and $P_1$ and just use the notation from the general case.  Then 
$$\P\ \setminus\ (u_1\cup u_2\cup \bigcup_{e\in E} e)$$ 
consists of disks $D_1,\ldots,D_m$ which we call the \textit{subregions} of $\Gamma$ .  Each disk $D_j$ has a collection of $\Gamma$-edges interior to $D_j$ and incident to $u_1$, call the number of such edges $n_1(D_j)$.  Similarly define $n_2(D_j)$ using $u_2$.  We first claim that $n_1(D_j)=n_2(D_j)$ for each $j$.  To see this, note that all $\Gamma$-edges are negative edges of $G_P$, so $\Gamma$ is a bipartite graph, and hence we may count its edges interior to $D_j$ by counting edges incident to positive vertices or by counting edges incident to negative vertices. Since $u_1$ and $u_2$ are of opposite sign, this gives:
\begin{align*}
n_1(D_j) &\equiv \left|E\big(\Gamma \cap \text{Int}(D_j)\big)\right| \pmod{v-1},\text{ and} \\
n_2(D_j) &\equiv \left|E\big(\Gamma \cap \text{Int}(D_j)\big)\right| \pmod{v-1}.
\end{align*}
Hence we have
$$n_1(D_j) \equiv n_2(D_j) \pmod{v-1}.$$

Since there is at least one Scharlemann cycle ($\sigma$, say) in $\Lambda$ on the labels $\{1,2\}$, we know that $0\leq n_i(D_j) < v-1$ for $i=1,\ 2$.  Hence 
$$n_1(D_j)=n_2(D_j)$$
for each $j\in\{1,\ldots,m\}$.  

Now consider just those edges of $E$ pertaining to $\sigma$; these edges cut $\widehat{P}$ into \textit{Scharlemann regions} $\Delta_1,\ldots, \Delta_l$.  Each $\Delta_k$ is a union of some number of subregions of $\Gamma$ along some edges between $u_1$ and $u_2$.  If $n_1(\Delta_k)$ and $n_2(\Delta_k)$ are defined analogously, we see that $n_1(\Delta_k)= n_2(\Delta_k)$ for each $k$: if $\Delta_k$ contains $m_k$ subregions $D^k_1,D^k_2\ldots, D^k_{m_k}$, then
\begin{align*}
n_1(\Delta_k) &= (m_k-1) + \sum_{j=1}^{m_k} n_1(D^k_j) \\
              &= (m_k-1) + \sum_{j=1}^{m_k} n_2(D^k_j) \\
              &= n_2(\Delta_k).
\end{align*}
 
Note that we could have proven $n_1(\Delta_k)= n_2(\Delta_k)$ for Scharlemann regions from first principals as we did $n_1(D_j)=n_2(D_j)$ for subregions of $\Gamma$.  We provided the proof in this manner because we believe it to be more clear (and slightly stronger).

Consider the torus $T$ obtained by tubing $\P$ along the boundary of the 1-handle $H_1$, where $H_1$ is the portion of the filling solid torus between $u_1$ and $u_2$ not containing fat vertices of $G_P$ in its interior.  On $T$, $\partial \sigma$ is an $\frac{l}{s}$-curve, corresponding to obtaining the lens space summand  $L(l,s)$.  The $\Delta_k$ may be ordered such that the interior $\Gamma$-edges of $\Delta_k$ at $u_1$ run over $H_1$ to the interior $\Gamma$-edges of $\Delta_{k+1}$ at $u_2$, where by $k+1$ we mean $k+1\pmod{l}$.  This follows from the fact that an interior $\Gamma$-edge with a label $\lambda$ at $u_1$ corresponds to an edge labeled $1$ at a vertex $v_\lambda$ in $\Lambda$; since there are no $1$- or $2$-ghosts, the edge labeled $2$ at $v_\lambda$ is an edge of $\Lambda$, and hence the $\lambda$-edge at $u_2$ is a $\Gamma$-edge.  Since the converse is also true, we see that $n_1(\Delta_k)=n_2(\Delta_{k+1})$, and thus $n_i(\Delta_k)=n$ is the same for all $k$ and for $i=1,2$;  this implies that $(1+n)\cdot l = v$ by counting the valence of $u_1$ in $\Gamma$, and thus $l$ divides $v$.
\end{proof}

\begin{proof}[Proof of Theorem \ref{thm:threesummandsbound}]
Recall that $l_1$ and $l_2$ are relatively prime, and that $|r|=l_1\cdot l_2$.  Both of these are facts about $H_1(S^3_r(K))$.  We thus see that $r$ divides $v$ (possibly trivially, \textit{i.e.} $r=\pm v$).  Since $v\leq \frac{q}{2}\leq b$, the result follows.  The final comment is due to the fact that $2 \leq l_1<l_2$ (without loss of generality).
\end{proof}

To conclude this section, we state two corollaries with a more technical flavor to them that additionally result from Proposition \ref{prop:webdivisibilty}.  Their proofs appear in Section \ref{sec:corollaries}.  First, we reprove the Cabling Conjecture for 2- and 3-bridge knots.  This was originally done in \cite{thesis:hoffman}.  This is accomplished by seeing that the number of vertices in a great web cannot be prime, something which is forced by those low bridge numbers.

\begin{corollary}\label{cor:primeweb} In the general case, $l$ cannot equal $v$, so $v$ cannot be prime. Hence the Cabling Conjecture holds for knots with $b\leq 3$, and modulo the case $l=2$, $v=4$, for knots with $b\leq 5$.
\end{corollary}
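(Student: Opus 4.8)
The whole statement will follow from the single claim that $l\ne v$. Indeed, Proposition~\ref{prop:webdivisibilty} gives $l\mid v$, and a Scharlemann cycle, being a face and so not a monogon, has length $l\ge 2$; so $l\ne v$ forces $v=l\cdot(v/l)$ with both factors at least $2$, i.e.\ $v$ is not prime. For the bridge-number statements I would then use that a great web must exist whenever a hyperbolic knot has a reducible surgery, and that $v\le\tfrac{q}{2}\le b$. If $b\le 3$ then any great web has $v\le 3$, yet $l\mid v$ together with $2\le l\ne v$ forces $v\ge 2l\ge 4$, a contradiction; hence no hyperbolic knot of bridge number at most $3$ has a reducible surgery, and since the Cabling Conjecture is already known for non-hyperbolic knots this settles it for $b\le 3$. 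If $b\le 5$ then $v\in\{4,5\}$; the value $v=5$ is prime and therefore impossible, leaving $v=4$, and then $l\mid 4$ with $2\le l\ne 4$ forces $l=2$, so the only configuration not ruled out is $l=2$, $v=4$.

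It remains to prove $l\ne v$, and I would argue by contradiction: suppose $l=v$. Returning to the end of the proof of Proposition~\ref{prop:webdivisibilty}, the identity $(1+n)\,l=v$ forces $n=0$, so no Scharlemann region $\Delta_k$ contains an interior $\Gamma$-edge at $u_1$ or $u_2$, and hence the only $\Gamma$-edges meeting $u_1$ or $u_2$ are the $v$ edges of the Scharlemann cycle $\sigma$. On the $G_Q$ side this says that every vertex of $\Lambda$ carries both its label-$1$ and its label-$2$ edge on $\partial\sigma$, and since the labels $1$ and $2$ are consecutive in the cyclic order at a vertex, $\sigma$ meets each vertex of $\Lambda$ in exactly one corner. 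Thus $\partial\sigma$ is a simple closed curve through all $v$ vertices of $\Lambda$, cutting $D_\Lambda$ into the face $\sigma$ and an annulus $A$; by Lemma~\ref{lem:webstruc} every edge of $\Lambda$ not lying on $\partial\sigma$ carries only regular labels, and all of these edges — together with the $p-2$ ghosts — lie in $A$ with their $\Lambda$-endpoints on the inner circle $\partial\sigma$. (We may take $p\ge 3$: a reducing sphere has $p\ge 2$, and $p=2$ would make $P$ an essential annulus, impossible for a hyperbolic knot.)

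The main obstacle is to wring a contradiction from this rigid picture, and I expect it to come from the annulus $A$ being too small to hold the regular part of $\Lambda$. For each regular label $\mu$, the path-following argument of Lemma~\ref{lem:webstruc} — using that $G_Q$ has no strict great cycle, while a great cycle carrying a regular label cannot be a Scharlemann cycle — shows that the $\mu$-edges of $\Lambda$ form a spanning tree of $V(\Lambda)$ rooted at the vertex carrying the $\mu$-ghost; letting $\mu$ run over the $p-2$ regular labels accounts for all $\tfrac{1}{2}(p-2)(v-1)$ edges of $\Lambda$ lying in $A$. I would then examine the faces of $\Lambda$ inside $A$: none is a monogon, and none can be bounded by a $\mu$-edged cycle of $\Lambda$-vertices, since such a face would be a strict great cycle. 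An Euler-characteristic count performed in $A$ — whose inner boundary carries all $v$ vertices while its outer boundary meets $\Lambda$ and the ghosts only in the $p-2$ ghost ends — should then be incompatible with the above edge count, yielding the contradiction. Alternatively one might use the degenerate configuration to compress $P$ and violate the minimality of $|\partial P|$, or appeal to the ordered-ghost structure of $\partial\Lambda$ mentioned after Lemma~\ref{lem:webstruc}. Any of these routes finishes the proof that $l\ne v$, and hence the corollary.
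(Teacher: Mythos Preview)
Your reduction to the claim $l\ne v$ and the bridge-number deductions are fine and match the paper. The setup under the assumption $l=v$ is also correct: the unique Scharlemann cycle $\sigma$ passes through every vertex of $\Lambda$ exactly once, and all remaining edges of $\Lambda$ lie in the annulus $A$ outside $\sigma$.

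The gap is in the last paragraph, where the contradiction is supposed to come. You offer three possible routes---an Euler-characteristic count in $A$, a compression of $P$, or the ordered-ghost structure---but you do not carry any of them out; the phrases ``should then be incompatible'' and ``any of these routes finishes the proof'' are aspirations, not arguments. In particular, the Euler count you sketch does not obviously go through: the annulus has $\chi(A)=0$, the faces in $A$ can be bigons and larger polygons with mixed regular labels, and nothing you have said forbids the edge count $\tfrac{1}{2}(p-2)(v-1)$ from fitting into $A$.

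The paper's proof uses a piece of machinery you never invoke: Proposition~\ref{prop:largewebs}, which says that a great web is \emph{large}, i.e.\ contains no \emph{full quota} (a family of $\tfrac{p}{2}$ parallel edges with no Scharlemann cycle). The argument then stays entirely inside $\Lambda$: with $l=v$, one locates a vertex $v_0$ of $\sigma$ all of whose $\Lambda$-edges go only to its two $\sigma$-neighbours (in one case this is immediate; in the other one passes to an innermost non-parallel edge and finds such a $v_0$ on the subarc of $\partial\sigma$ it cuts off). The edges at $v_0$ on one side then form a full quota, so $\Lambda$ is small, contradicting Proposition~\ref{prop:largewebs}. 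That proposition in turn rests on nontrivial topology (a $\pi_1$ argument when $p\equiv 0\pmod 4$, and a band-sum/Scharlemann argument when $p\equiv 2\pmod 4$), so the contradiction is not purely combinatorial---which is likely why your Euler-count attempt does not close.
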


In \cite{pap:howiethreesummands}, a subgraph called a \textit{sandwiched disk} is defined and used to establish the bound $|r|\leq \frac{1}{4}b(b+2)$ in the three summands case.  A sandwiched disk in $G_Q$ has boundary the union of two arcs, each of which is a subarc of a  Scharlemann cycle, with the additional property that there are no vertices in the interior of the disk.  It is shown that a sandwiched disk  gives rise to a contradiction through a careful analysis of its interior structure.  Since $\Lambda$ needs to contain a Scharlemann cycle of each length $l_i$ and all its vertices are the same sign, it appears to be a good place to search for sandwiched disks.  The following result says that when we restrict our attention to the great web, disks whose boundary is sandwiched cannot exist, independent of the number of vertices in its interior.

\begin{corollary}\label{cor:noPSD}
In the three summands case, no Scharlemann cycle of $\Lambda$ intersects another in more than one vertex.
\end{corollary}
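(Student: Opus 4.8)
The plan is to argue by contradiction. Suppose two Scharlemann cycles $\sigma$ and $\sigma'$ of $\Lambda$ meet in at least two vertices. If $\sigma$ and $\sigma'$ have the same length they have the same corner labels by Proposition~\ref{lem:howiescsarenice}(3), say $(1,2)$; then at a common vertex $w$ both would contain the single corner of $G_Q$ lying between the edges labeled $1$ and $2$ at $w$, so $\sigma=\sigma'$. Hence I may assume $\sigma=\sigma_1$ has length $l_1$ with labels $(1,2)$, $\sigma'=\sigma_2$ has length $l_2\neq l_1$ with labels $(x,x+1)$, and $t:=|V(\sigma_1)\cap V(\sigma_2)|\ge 2$, and I want a contradiction.

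Next I would transport the configuration to $\widehat P$ through the graph $\Gamma$ of the proof of Proposition~\ref{prop:webdivisibilty}. The edges of $\sigma_1$ are $\Gamma$-edges joining $u_1$ to $u_2$, so together with $u_1,u_2$ they form a ``theta graph'' $\Theta_1$ cutting $\widehat P$ into the Scharlemann regions $\Delta_1,\dots,\Delta_{l_1}$; likewise $\sigma_2$ gives $\Theta_2$ joining $u_x$ to $u_{x+1}$, cutting $\widehat P$ into $\Delta'_1,\dots,\Delta'_{l_2}$. Since $x,x+1\notin\{1,2\}$, every edge of $\sigma_2$ is a $\Gamma$-edge, hence an edge of $G_P$ disjoint in $\widehat P$ from $u_1$, $u_2$ and from every edge of $\sigma_1$, i.e.\ disjoint from $\Theta_1$; as these edges all join $u_x$ to $u_{x+1}$, the whole graph $\Theta_2$ (in particular $u_x$ and $u_{x+1}$) lies in a single region $\Delta_a$, and symmetrically $\Theta_1$ lies in a single region $\Delta'_{a'}$. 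On the sphere $\widehat P$ this forces a disk--annulus--disk splitting $\widehat P=C_1\cup A\cup C_2$, with $\Theta_1$ on the $C_1$ side, $\Theta_2$ on the $C_2$ side, and $A=\Delta_a\cap\Delta'_{a'}$; any edge of $G_P$ running from the $u_1$--$u_2$ side to the $u_x$--$u_{x+1}$ side must cross the core of this annulus.

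Now I would bring in the quantitative content of Proposition~\ref{prop:webdivisibilty}: $l_1\mid v$ and $l_2\mid v$, and since $l_1,l_2$ are coprime (a fact about $H_1(S^3_r(K))$, as in the proof of Theorem~\ref{thm:threesummandsbound}), $l_1l_2\mid v$ and hence $v\ge l_1l_2$. Inside the proof of that proposition each region $\Delta_k$ carries exactly $n_1=v/l_1-1$ interior $\Gamma$-edges at $u_1$; moreover (using Lemma~\ref{lem:webstruc}(1)) the map sending a vertex $w$ of $\Lambda$ to the unique edge of $\Lambda$ labeled $1$ at $w$ is a bijection from $V(\Lambda)$ onto the $\Gamma$-edges at $u_1$, carrying $V(\sigma_1)$ onto the edge set of $\sigma_1$ and $V(\Lambda)\setminus V(\sigma_1)$ onto the interior $\Gamma$-edges, $n_1$ of them in each $\Delta_k$. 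The goal is to show that such a common vertex is incompatible with all of this: each $w\in V(\sigma_1)\cap V(\sigma_2)$ is a vertex of $\sigma_2\subseteq\Delta_a$, so its edges labeled $x,x+1$ lie in $\Delta_a$; pairing these with its edges labeled $1,2$ via the over-$H_1$ correspondence of Proposition~\ref{prop:webdivisibilty}, and using that any such edge must cross the annulus $A$ to reach $\Theta_1$, I expect to pin the interior $\Gamma$-edges of $\Delta_a$ at $u_1$ against the vertex set $V(\sigma_2)$; feeding in $n_1=v/l_1-1\ge l_2-1$ then forces $t\le 1$.

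The main obstacle is exactly this last step: the region $\Delta_a$ generally contains many regular vertices of $\Gamma$ and many edges unrelated to either Scharlemann cycle, so the pairing and the crossing-the-annulus argument must be run carefully enough to isolate which of the $n_1$ interior $\Gamma$-edges of $\Delta_a$ at $u_1$ actually come from vertices of $\sigma_2$. The tools I would use are the bipartiteness of $\Gamma$ and the parity statement noted after its construction (an odd number of edges of $G_P$ lies between any two $\Gamma$-edges at a vertex), together with the $\sigma_2$-analogue of the $\Delta_k$-count, which controls the interior $\Gamma$-edges at $u_x,u_{x+1}$ inside $\Delta_a$ and ties them back to $\Theta_2$. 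I would also check the degenerate small cases (for instance $l_1=2$, where every edge of $\sigma_1$ bounds $\Delta_a$ and the disk--annulus--disk picture partly collapses) separately.
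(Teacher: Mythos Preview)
Your argument has a topological error that undercuts the whole geometric picture. In the three summands case $\widehat P=\widehat P_1\sqcup\widehat P_2$ is a \emph{disjoint union of two spheres}. By construction $\sigma_1$ is a Scharlemann cycle of $G_1$, so its labels $1,2$ belong to $P_1$ and the vertices $u_1,u_2$ lie on $\widehat P_1$; likewise the labels $x,x+1$ belong to $P_2$ and $u_x,u_{x+1}$ lie on $\widehat P_2$. Hence your theta graphs satisfy $\Theta_1\subset\widehat P_1$ and $\Theta_2\subset\widehat P_2$. They are automatically disjoint, but for the trivial reason that they live on different spheres, and there is no disk--annulus--disk decomposition of a single sphere containing both. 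In particular there is no ``annulus $A$'' that an edge from $u_1$ to $u_x$ would have to cross (indeed no edge of $G_P$ runs from $\widehat P_1$ to $\widehat P_2$ at all), so the mechanism you propose for pinning the interior $\Gamma$-edges of $\Delta_a$ against $V(\sigma_2)$ never gets off the ground. Your first paragraph, reducing to the case of one Scharlemann cycle of each length, is fine; and your observation that Scharlemann vertices have valence $v$ in $\Gamma$, so that the $\Gamma$-edges at $u_1$ are exactly those labeled by vertices of $\Lambda$, is the correct starting point.

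The paper exploits exactly that last observation, but via a pure label count rather than any planar nesting. Since the set of $\Gamma$-edge labels at $u_1$ and at $u_x$ is the \emph{same} set $V(\Lambda)$ (both are Scharlemann vertices, so have $\Gamma$-valence $v$), the number of $\Gamma$-edges with label strictly between $a$ and $b$ is the same at $u_1$ as at $u_x$. At $u_1$ the $a$- and $b$-edges are edges of $\sigma_1$, and each of the Scharlemann regions between consecutive $\sigma_1$-edges contains exactly $n_1=v/l_1-1=nl_2-1$ interior $\Gamma$-edges (this is the content of Proposition~\ref{prop:webdivisibilty}, with $v=nl_1l_2$); so if there are $k_1$ $\sigma_1$-edges strictly between them one counts $k_1+(k_1+1)(nl_2-1)$. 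Doing the same at $u_x$ with $n_x=nl_1-1$ and equating gives $(k_1+1)l_2=(k_2+1)l_1$, hence $l_2\mid k_2+1$; but $0\le k_2\le l_2-2$, a contradiction. No geometry of $\widehat P$ beyond the cyclic order of labels at a vertex is used, which is precisely what survives the passage to two separate spheres.
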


\section{Positive Braids} \label{sec:positivebraids}
In this section we complete the proof that closures of positive braids satisfy the Two Summands Conjecture.  As a result of Theorem \ref{thm:threesummandsbound}, $|r|\leq b$.  As discussed in the introduction, \cite{pap:lidman_sivek_contact_reducible} established the extremely strong condition that if a hyperbolic positive knot has a reducible Dehn surgery of slope $r$, then $r=2g-1$.  Thus we must relate the bridge number of $K$ to the genus of $K$, a task which may be accomplished if one restricts from positive knots to the closure of positive braids.  Recall that a braid on $n$ strands is built from the braid letters $\{\sigma_i^{\pm 1}\}_{i=1}^{n-1}$, where $\sigma_i$ positively exchanges the $i$th and $(i+1)$st strands.

\begin{proof}[Proof of Corollary \ref{cor:2sc_positive_braids}].  Let $K$ be the closure of a positive braid.  Then $K$ is a positive knot, so by \cite{pap:lidman_sivek_contact_reducible} if $K$ has a reducible Dehn surgery of slope $r$, then $r=2g-1$, where $g$ denotes the genus of $K$.  Hence it suffices to show that if $K$ has a reducible surgery with three summands, then $r\neq 2g-1$.  We may assume by Theorem \ref{thm:threesummandsbound} that the bridge number $b$ of $K$ is at least 6, and that $|r|\leq b$.  Let $\beta$ be a positive braid representing $K$ subject to the constraint that the strand number $n$ of $\beta$ is minimized among positive braids representing $K$.  By \cite{pap:stallings_constructions}, the surface obtained from $\hat{\beta}$ by taking $n$ disks and adding positive bands for each braid letter $\sigma_i$ is fibered, and hence minimal genus.  Thus if $e=e(\beta)$ denotes the total exponent sum, we have 
$2g-1 = e-n.$  Let $e_i=e_i(\beta)$ denote the exponent sum of $\sigma_i$ in $\beta$.  Now if any $e_i=1$, then there exists a sphere passing only through this crossing demonstrating $\hat{\beta}$ as a connected sum.  Since $K$ is hyperbolic, one of the two knots is the unknot, and this allows us to destabilize $\beta$ and obtain a positive braid on fewer strands representing $K$. Thus every $\sigma_i$ occurs at least twice in $\beta$, giving $e\geq 2(n-1)$.  In fact, define $e= 2n-2+s$ for $s$ a non-negative integer.  Noting that $\hat{\beta}$ is a bridge presentation for $K$, we see that
\begin{align*}2g-1 &= e-n\\
&= 2n-2+s - n\\
&= n + (s -2) \\
&\geq b + (s-2)\\
&\geq |r| + (s-2),\end{align*}
so if $s\geq 3$ then the proof is complete.  We proceed with an analysis of possible braid words.  Since $b\geq 6$, we have that $\sigma_1$, $\sigma_2$, $\sigma_3$, $\sigma_{n-2}$, and $\sigma_{n-1}$ all exist and are distinct.  Denote braid equivalence by $\sim$.

\begin{figure}[bt]
\begin{center}
\begin{subfigure}{.49\textwidth}
\begin{center}
\includegraphics[width = .3\textwidth]{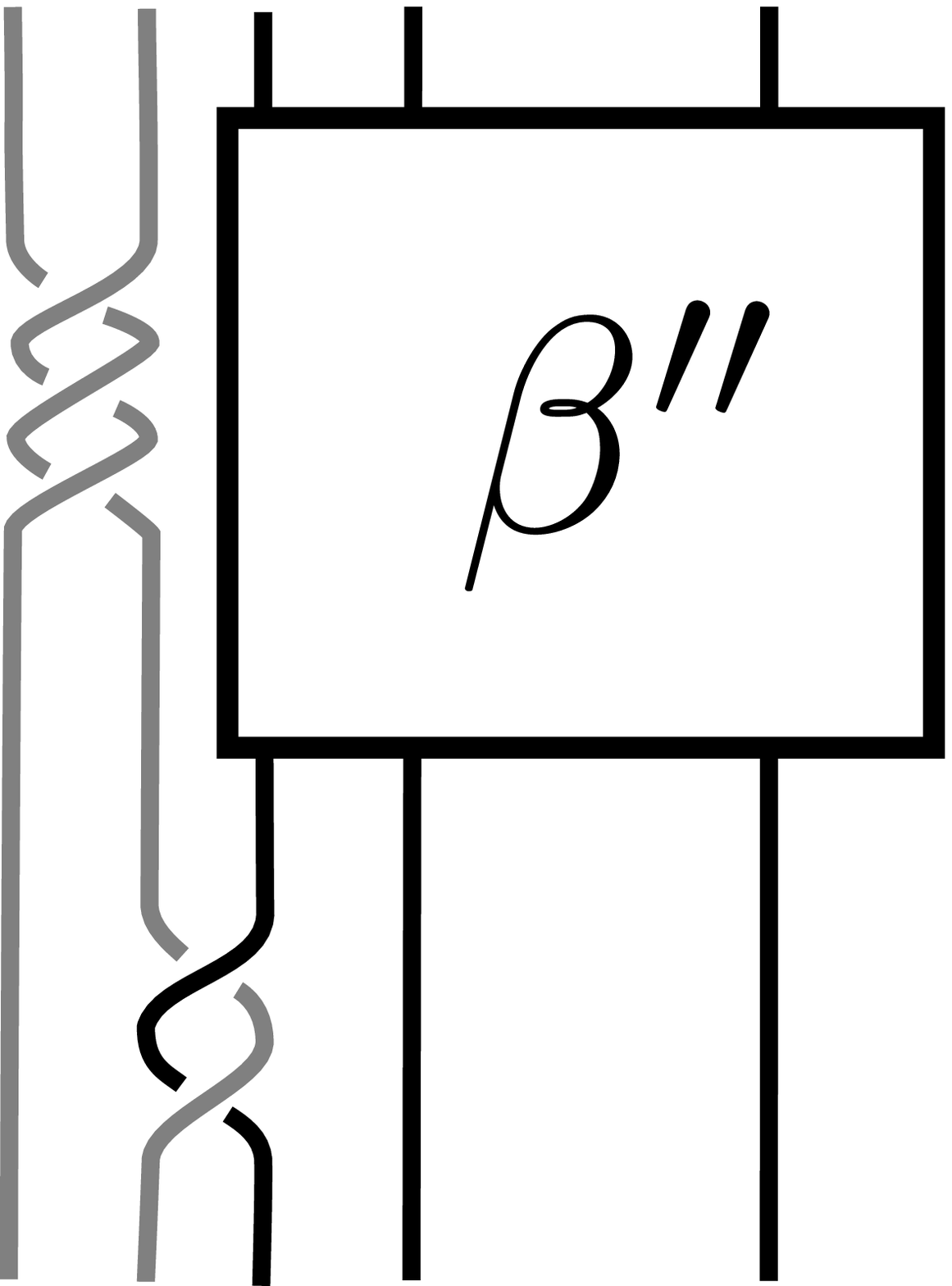}
\caption{The closure of ${\sigma_1}^3\beta''{\sigma_2}^2$ is a link.}
\label{fig:braidmiddlecase}
\end{center}
\end{subfigure}
\begin{subfigure}{.49\textwidth}
\begin{center}
\includegraphics[width = .3\textwidth]{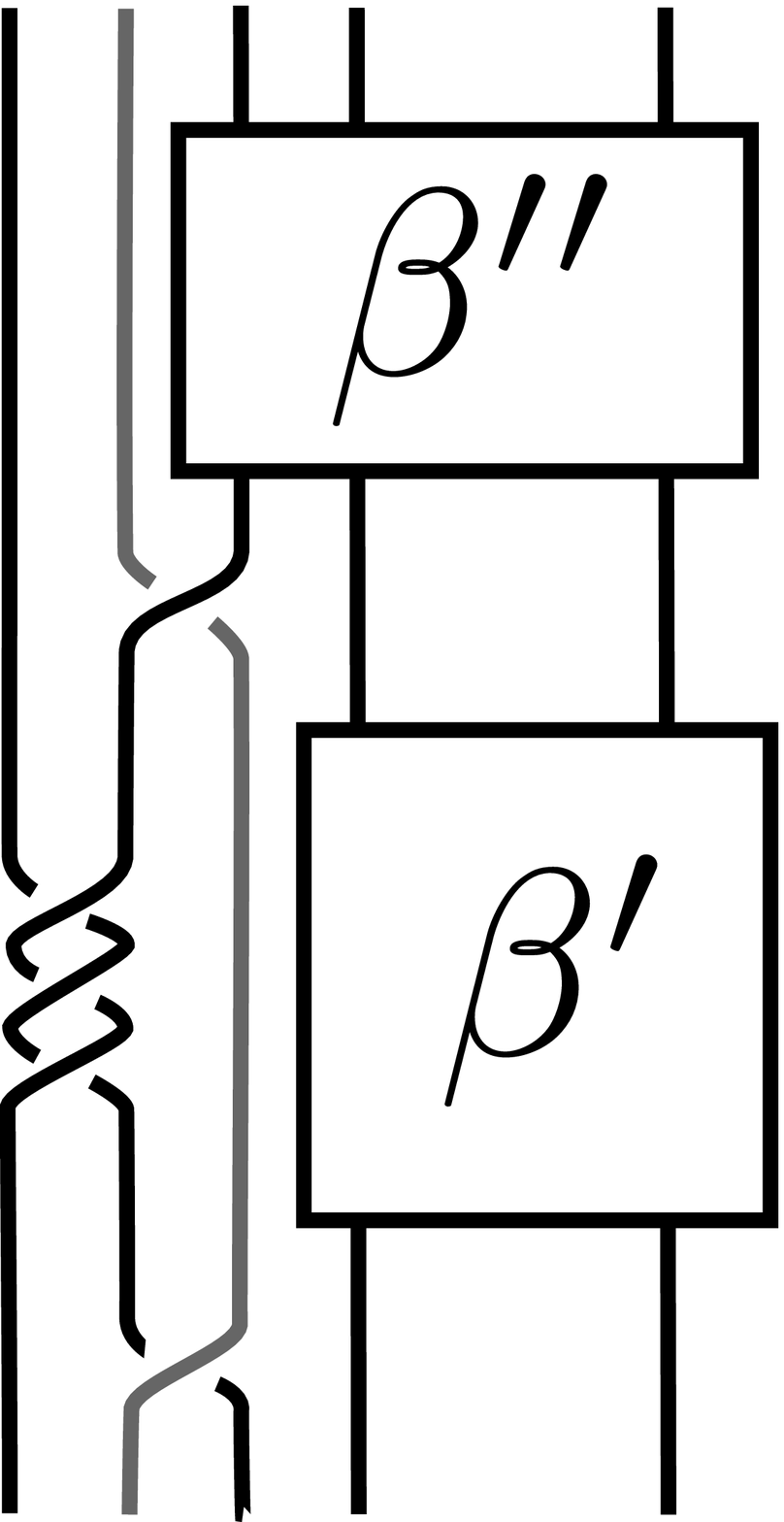}
\caption{The closure of $\beta''\sigma_2{\sigma_1}^3\beta'\sigma_2$ is a link.}
\label{fig:braidfinalcase}
\end{center}
\end{subfigure}
\caption{}
\end{center}
\end{figure}

Our first claim is that at least one of $e_1\geq 3$ or $e_2\geq 4$ must hold.  Suppose $e_1 = 2$ and $e_2\leq 3$, and note that $\beta\not\sim{\sigma_1}^2\beta'$, as then it is clear that $K$ is a link of at least two components.  Thus it must be that 
$$\beta \sim \sigma_1 \beta'\sigma_1\beta'',$$
where $e_2(\beta')$ and $e_2(\beta'')$ are nonzero, as otherwise ${\sigma_1}^2$ could be chosen to occur in $\beta$.  Note $e_1(\beta')=0$ and $e_1(\beta'')=0$.  Since $e_2 \leq 3$, at least one of $\beta'$ and $\beta''$ would have exactly one occurrence of $\sigma_2$.  This means that we could change $\beta$ so that 
\begin{equation}\label{slideandcancel}\begin{aligned}
\beta &\sim \sigma_1\sigma_2\sigma_1\beta'''\\
&\sim \sigma_2\sigma_1\sigma_2\beta'''\\
&\sim \sigma_1\sigma_2\beta'''\sigma_2,\\
&\sim \sigma_2\beta'''\sigma_2,
\end{aligned}
\end{equation}
and $e_1(\sigma_2\beta'''\sigma_2)=0$, a contradiction.  Thus one of $e_1\geq 3$ or $e_2\geq 4$ must hold, and similarly must one of $e_{n-1}\geq 3$ or $e_{n-2}\geq 4$.  If $s\leq 2$, this forces $e_1=3$, $e_{n-1} = 3$, and $e_i=2$ for $i\in\{2,3,\ldots,n-2\}$.

Next we see that $\beta\not\sim {\sigma_2}^2\beta'\ (\sim {\sigma_1}^3\beta''{\sigma_2}^2)$, since $e_2=2$ and hence $K$ is a link of at least two components, see Figure \ref{fig:braidmiddlecase}.  Thus we see that 
$$\beta \sim \sigma_2 \beta'\sigma_2\beta'',$$ 
where $\beta'$ and $\beta''$ each contain a $\sigma_1$ or $\sigma_3$.  Without loss of generality, we have two cases: $e_1(\beta')=2$ or $3$.  If $e_1(\beta')=2$, then we use the relation $\sigma_1\sigma_2\sigma_1= \sigma_2\sigma_1\sigma_2$ twice to remove $\sigma_1$ from $\beta$ as in the process described in \ref{slideandcancel}, a contradiction.  Thus as a last case, assume $e_1(\beta')=3$.  Then either $e_3(\beta')=0$ or $1$, since $e_3(\beta'')\geq 1$.  If $e_3(\beta')=1$, we proceed by using $\beta''$ to reduce $e_3(\beta)$ to $1$ and again see $\hat{\beta}$ as a connected sum, to remove the first three or last $n-4$ strands of $\beta$.  Finally, if $e_3(\beta')=0$, then Figure \ref{fig:braidfinalcase} shows that $K$ is in fact a link of at least two components, a contradiction.

Thus $s\geq 3$, and $K$ could not produce three connected summands by Dehn surgery.
\end{proof}

While the proof of Corollary \ref{cor:2sc_positive_braids} is certainly sufficient, it is perhaps a first approximation to the phenomenon that minimal positive braid words representing a nontrivial knot with sufficiently large bridge number seem to have large exponent sum relative to their strand number (\textit{i.e.} $e> 2n$).  

\begin{question}
For which classes of knots is there a relationship between the bridge number of a knot and the difference $e-n$ of a minimal-strand braid representing the knot?
\end{question}

Note that by the resolution of the Jones Conjecture \cite{pap:dynnikov_prasolov_Jones_Conjecture}, $e-n$ is a knot invariant, where one is careful in defining $e$ to be the \textit{algebraic} exponent sum.  For positive braids, the total exponent sum and the algebraic exponent sum are the same.  The statement $e-n > b$ is false in general, even among positive braids: ${\sigma_1}^3$ is a minimal braid for the trefoil.

It would be interesting to apply a similar argument as in the proof of Corollary \ref{cor:2sc_positive_braids}, if possible, to a class of knots called $L$\textit{-space knots}.  An $L$-space knot is a knot in $S^3$ with a positive surgery resulting in an $L$-space, a manifold whose Heegaard Floer homology is minimal in a suitable sense.  Lens spaces, elliptic manifolds, and connected sums thereof are all $L$-spaces, and there are many others.  See \cite{pap:ozsz3-manifolds_1} for the definition of Heegaard Floer homology, and \cite{pap:ozszhflens} for the definitions and basic facts about $L$-spaces and $L$-space knots.

The argument above required positivity of braids.  There are several generalizations of positivity, such as \textit{homogeneity} \cite{pap:stallings_constructions}, which still gives a fibered knot.  $L$-space knots need not have a positive (or homogeneous) braid representative, but they have a \textit{quasipositive} braid representative \cite{pap:hedden_notions_QP}, which is a different generalization of positivity, and are fibered \cite{pap:nihfkfibered}.  Additionally, they have the same property that the only possible reducing slope among hyperbolic representatives is $2g-1$ \cite{pap:hom_lidman_zufelt}.  Thus they are another good candidate class of knots for which to attempt to resolve the Two Summands Conjecture using Theorem \ref{thm:threesummandsbound} and the techniques of Corollary \ref{cor:2sc_positive_braids}.

\section{Further Analysis of Great Webs}\label{sec:corollaries}

In order to prove Corollary \ref{cor:primeweb} we need to show that, roughly, great webs tend to contain a great deal of topological and algebraic topological information.

\begin{definition} Let $\Lambda$ be a great web in the either the general or the three summands case.  Then $\Lambda$ is \textnormal{small} if there exists a collection of $\frac{p}{2}$ parallel edges in $\Lambda$ containing no Scharlemann cycles, otherwise $\Lambda$ is \textnormal{large}.  Such a collection of parallel edges is called a \textnormal{full quota}.
\end{definition}

The following Proposition uses a pair of techniques which are very different from each other.  The $p\equiv 0\pmod{4}$ case is an adaptation of an argument of Howie \cite[Lemma 2.4]{pap:howiethreesummands}.  The $p\equiv 2\pmod{4}$ case is due to Cameron Gordon and John Luecke; it is an adaptation of an argument of Thompson \cite{pap:thompson_bandsum_propP}.  

\begin{proposition}  \label{prop:largewebs}Let $\Lambda$ be a great web in either case.  Then $\Lambda$ is large.
\end{proposition}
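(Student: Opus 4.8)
The plan is to argue by contradiction. Suppose $\Lambda$ is small, so it contains a full quota: a family $\mathcal{E}=\{e_1,\dots,e_{p/2}\}$ of $p/2$ mutually parallel edges of $\Lambda$, running (after ordering across the bigon faces they cobound) between two fat vertices $u$ and $w$, and containing no Scharlemann cycle. The first step is to record the label bookkeeping forced on any such family. Since $r$ is an integer, the labels $1,\dots,p$ occur exactly once, in cyclic order, around every vertex of $G_Q$; hence the labels of $e_1,\dots,e_{p/2}$ at $u$ form a block $B$ of $p/2$ consecutive labels, and since $u$ and $w$ have the same sign (all vertices of $\Lambda$ do) the labels at $w$ form a block of $p/2$ consecutive labels read in the opposite direction. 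Writing these as $a,a+1,\dots,a+\tfrac p2-1$ at $u$ and $b,b-1,\dots,b-\tfrac p2+1$ at $w$, a direct check shows that the bigon between $e_i$ and $e_{i+1}$ is a Scharlemann cycle precisely when $2i\equiv b-a+1\pmod p$. Because $i$ ranges over $1,\dots,\tfrac p2-1$, the values $2i\bmod p$ are exactly the nonzero even residues, so the absence of a Scharlemann cycle in $\mathcal{E}$ forces either $b-a$ even (so that $B$ and its mirror block overlap with even offset) or $b\equiv a-1\pmod p$ (so that the two blocks of labels are complementary and partition $\{1,\dots,p\}$). The remaining task is to show that neither possibility can occur in a great web, and this is where the parity of $\tfrac p2$, i.e. $p\bmod 4$, enters; the two cases genuinely require different tools. (The case of parallel loops, $u=w$, is covered by the same bookkeeping.)

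For $p\equiv 0\pmod 4$ I would adapt Howie's argument \cite[Lemma 2.4]{pap:howiethreesummands}. The global inputs are the rigidity of $\Lambda$: by Lemma \ref{lem:webstruc} and Proposition \ref{lem:howiescsarenice}, each regular label has exactly one ghost, each Scharlemann label has none, all $v$ vertices have the same sign, $G_Q$ contains a Scharlemann cycle of each relevant length, and $G_Q$ contains no strict great cycle. The idea is to enlarge $\mathcal{E}$ to the maximal parallel family containing it, propagating along labels with the parity rule exactly as in the proof of Lemma \ref{lem:webstruc}, and then run a parity count on the labels occurring in this family. Because $\tfrac p2$ is even, the ghost-free Scharlemann labels are distributed so that some bigon of the enlarged family must have both corners carrying the same consecutive pair of labels, hence is a Scharlemann cycle — contradicting the choice of $\mathcal{E}$ — or else one obtains a strict great cycle in $G_Q$, which is likewise forbidden. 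When $p\equiv 2\pmod 4$ this parity count does not close up.

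For $p\equiv 2\pmod 4$ I would follow the argument of Gordon and Luecke, modelled on Thompson's band-sum argument \cite{pap:thompson_bandsum_propP}. Here I would transfer $\mathcal{E}$ to $\widehat P$ through the correspondence $\Gamma$ of Section \ref{sec:prop}: the $p/2$ edges of $\mathcal{E}$ become a perfect matching in $G_P$ between two blocks of $p/2$ consecutive fat vertices of $G_P$, which in the complementary-block case partition all the vertices of $G_P$ (or of $G_{P_1}$, in the three summands case). Tubing $\widehat P$ along the relevant $1$-handles of the filling solid torus, exactly as in the proof of Proposition \ref{prop:webdivisibilty}, converts this matching into a system of curves and meridian disks on a surface of small Euler characteristic; an innermost-disk argument on that surface, together with the fact that $\tfrac p2$ is odd, then forces one of the bigons of $\mathcal{E}$ to be a Scharlemann cycle, again a contradiction. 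In the three summands case one must, as in Proposition \ref{prop:webdivisibilty}, keep track of which component $P_1$ or $P_2$ carries the labels of $\mathcal{E}$ and run the argument on that piece and its $1$-handles.

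I expect the main obstacle to be verifying that these two borrowed arguments really survive the passage to the present setting — in particular that $P$ may be disconnected in the three summands case (the same point that had to be checked for Theorem \ref{thm:alltypesorweb}), and that the only global facts they use are the ghost/Scharlemann-cycle structure of Lemma \ref{lem:webstruc} and Proposition \ref{lem:howiescsarenice}, the nonexistence of strict great cycles in $G_Q$, and the interpretation of a Scharlemann cycle as a lens space summand. As the acknowledgments indicate, the $p\equiv 2\pmod 4$ half is precisely the piece supplied by Gordon and Luecke, so the work there is mainly one of faithful adaptation. The remaining care is bookkeeping: confirming that ``no Scharlemann cycle in $\mathcal{E}$'' really does reduce to exactly the two offset cases above, and that the even/odd dichotomy for $\tfrac p2$ aligns with which of Howie's or Thompson's method applies; everything else is label-propagation of the kind already used to prove Lemma \ref{lem:webstruc}.
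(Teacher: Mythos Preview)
Your case split on $p\bmod 4$ and your attributions (Howie for $p\equiv 0$, Gordon--Luecke after Thompson for $p\equiv 2$) are correct, and your label bookkeeping in the first paragraph is fine. But the \emph{content} you assign to each of the two borrowed arguments is wrong, and neither of your sketches matches what is actually needed.

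For $p\equiv 0\pmod 4$, Howie's argument is not a combinatorial label-propagation or parity count inside $\Lambda$; it is a fundamental-group argument in $\pi_1(S^3_r(K))$. One takes generators $g_i$ corresponding to the $1$-handles $H_i$ of the filling torus, so that the meridian reads $\mu=g_1g_2\cdots g_p$. Each bigon $B_i$ of the full quota (together with two triangles on $\widehat P$) gives a relation $g_{a+i}g_{a-i}=1$, and these relations collapse $\mu$ to $g_a\,w\,g_b\,w^{-1}$ with $b=a+\tfrac p2$. The point of $p\equiv 0\pmod 4$ is that $\tfrac p2$ is even, so $g_a$ and $g_b$ lie in the \emph{same} free factor of $\pi_1(S^3_r(K))\cong \Z/l\Z * G$; hence $\langle\!\langle\mu\rangle\!\rangle$ has nontrivial index, contradicting $\pi_1(S^3_r(K))/\langle\!\langle\mu\rangle\!\rangle=1$. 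Your proposed mechanism---enlarging the parallel family and finding a Scharlemann cycle or a strict great cycle---is not how this case is handled, and it is not clear that such a purely combinatorial argument can be made to work here.

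For $p\equiv 2\pmod 4$, the Gordon--Luecke/Thompson argument is not a tubing-plus-innermost-disk argument on $\widehat P$. The full quota is read as a \emph{band} $B$ on $Q$ exhibiting the dual knot $K'\subset S^3_r(K)$ as a band sum; the hypothesis $p\equiv 2\pmod 4$ is used to ensure the two arcs being banded lie on opposite sides of $\widehat P$ and hence do not link. One then takes a meridian $J$ of the band, performs $0$-surgery on $J$, and uses Gabai's resolution of the Poenaru conjecture to conclude that $J$ is unknotted in $S^3$. This places $K$ essentially in a solid torus with a reducible surgery, and Scharlemann's theorem then forces $K$ to be a cable, contradicting hyperbolicity. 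None of these deep topological inputs (Gabai, Scharlemann) appear in your sketch, and an innermost-disk argument on a tubed $\widehat P$ does not produce the contradiction.
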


\begin{proof}
It is an immediate consequence of \cite[Lemma 2.4]{pap:howiethreesummands} that $\Lambda$ must be large in the three summands case.  In fact, something stronger holds: the bigons given by a full quota not need be located entirely in a single parallelism class of edges.  We will need this rigidity in the general case.

Suppose first that $\Lambda$ is a great web in the general case and $p\equiv 0 \pmod{4}$.  Define the 1-handle $H_i$ to be the component of the filling solid torus contained between the fat vertices $u_i$ and $u_{i+1}$ which contains no other $u_j$.  Let $z\in \P$ and define $g_i$ to be an element of $\pi_1(S^3_r(K),z)$ represented by a curve which travels from $z$ through $\P$ to the vertex $u_i$, over $\partial H_i$ running parallel to the arcs of $Q\cap H_i$ to $u_{i+1}$, then back to $z$ through $\P$.  Note that $\mu = g_1g_2\cdots g_p$ is represented by a meridian of $K$.  Thus $\pi_1(S^3_r(K))/\left<\left<\mu\right>\right>$ is trivial.  However, if $\Lambda$ is small, then $\Lambda$ contains a full quota, which in turn contains bigons $B_i$ which give rise to relations $g_{a+i}g_{a-i}=1$, for $i=1,\ldots,\frac{p-2}{2}$.  To see this relation (without the use of conjugates), add a pair of triangles $\Delta_{a-i}^{a+i+1}$ and $\Delta_{a+i}^{a-i+1}$ in $\P$ to $B_i$, see Figure \ref{fig:pionerelations}.  Thus we see that $\mu=g_awg_bw^{-1}$, for $b = a+\frac{p}{2}$.  

\begin{figure}[hbt]
\begin{center}
\includegraphics[width = .4\textwidth]{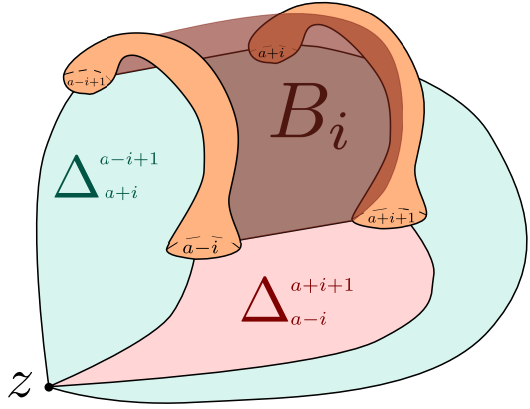}
\caption{The disks extending $B_i$ to give the relation $g_{a+i}g_{a-i}=1$.}
\label{fig:pionerelations}
\end{center}
\end{figure}

Note that $g_a$ and $g_b$ are elements of a factor of $\pi_1(S^3_r(K))=\Z/l\Z * G$.  Since $p\equiv 0\pmod{4}$, these are elements of the same factor, and hence the normal closure $\left<\left<g_awg_bw^{-1}\right>\right>$ has nontrivial index in $\pi_1(S^3_r(K))$.  This is a contradiction.

Hence we may suppose that we are in the general case with $p\equiv 2 \pmod{4}$.  Let $K'$ denote the dual knot of $K$ in $S^3_r(K)$.  Then if $\Lambda$ is small, there is a disk $B$ on $Q$ giving the parallelism  between the outermost edges of the full quota.  This $B$ is a band demonstrating $K'$ as a band sum in $S^3_r(K)$, see Figure \ref{fig:findingbandmeridian}.  Note that the assumption $p\equiv 2 \pmod{4}$ is important, because the tangles (corresponding to cores of $H_a$ and $H_b$) must lie on opposite sides of $\P$ in order for us to say that they do not link.  Let $J$ be a \textit{meridian} of the band $B$, \textit{i.e.} perform the following construction: fix an identification $N(B)\cong B\times [-1,1]$ so that $B_0=B\times \{0\}$ is a slightly wider band than $B$, let $\alpha$ be a properly embedded arc in $B_0$ connecting the two components of $B_0 \cap \partial E(K)$ where $E(K)$ denotes the exterior of $K$ (the `long' edges of the band $B_0$), and define 
$$J = \left(\alpha \times \{\pm 1\}\right)\cup \left(\partial\alpha \times [-1,1]\right).$$
Let $Y$ be the result of performing $0$-framed Dehn surgery on $J$.  Then since $J$ was an unknot in $S^3_r(K)$ (for example, it bounded the disk $\alpha \times [-1,1]$), $Y$ contains an $S^1\times S^2$ connected summand.  In fact, since $J'$, the dual knot to $J$ in $Y$, bounds a disk disjoint from $K'$, we may use this disk to isotope the band $B$ and $K'$ in $Y$ to see that it is an unknotted band which doesn't link either summand of the band sum.  See Figure \ref{fig:unknotting}. That is, in $Y$, $K'$ is a connected sum, and lies in a ball disjoint from the $S^1\times S^2$ connected summand.  Perform the dual surgery on $K'$ to obtain a knot in $S^3$ (that is, $J$) with a surgery containing an $S^1\times S^2$ connected summand.

\begin{figure}[hbt]
\begin{center}
\begin{subfigure}{.49\textwidth}
\begin{center}
\includegraphics[width = .75\textwidth]{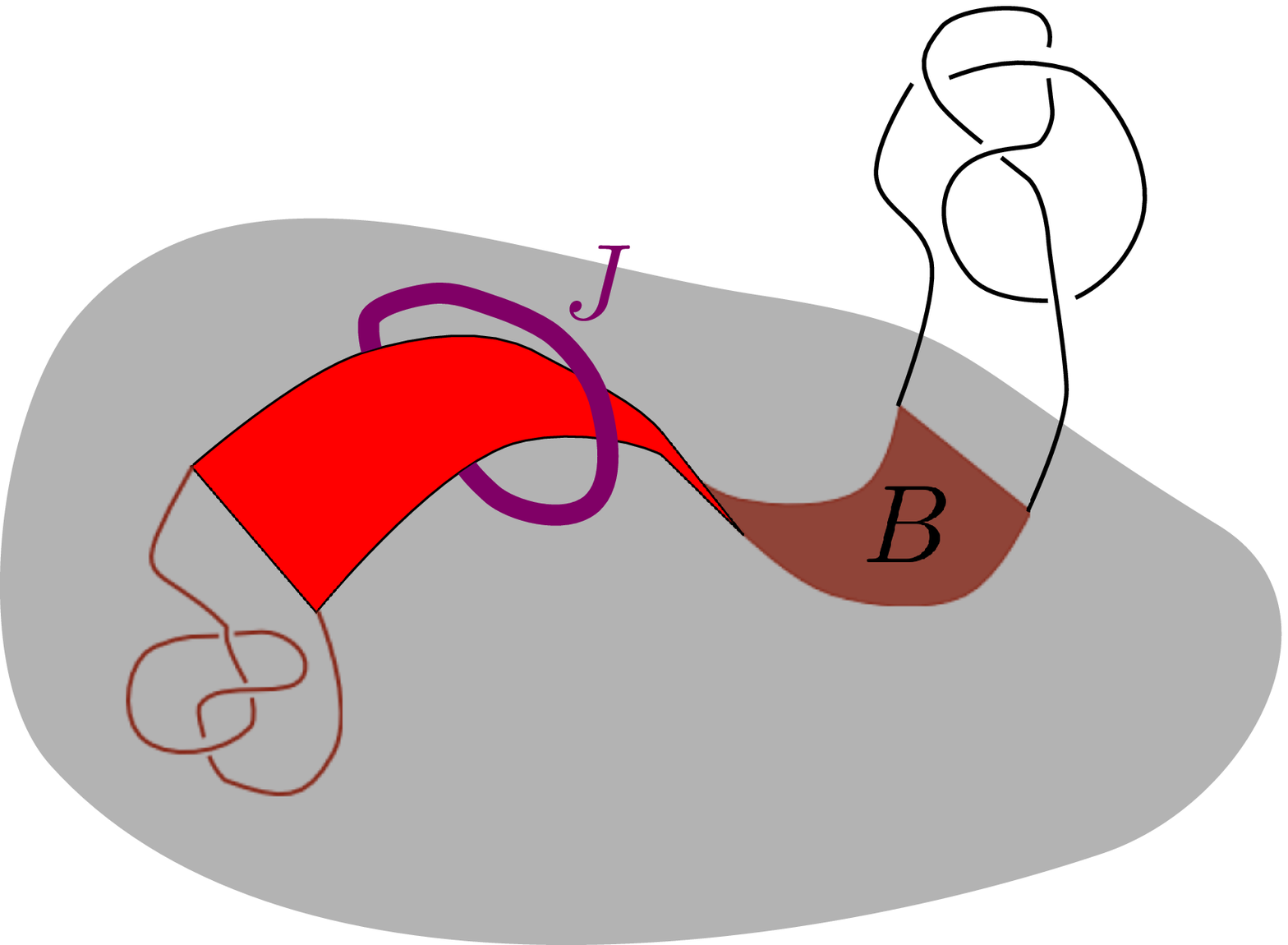}
\caption{$K'$ is a band sum in $S^3_r(K)$, and $J$ is a meridian of the band $B$.}
\label{fig:findingbandmeridian}
\end{center}
\end{subfigure}
\begin{subfigure}{.49\textwidth}
\begin{center}
\includegraphics[width = .75\textwidth]{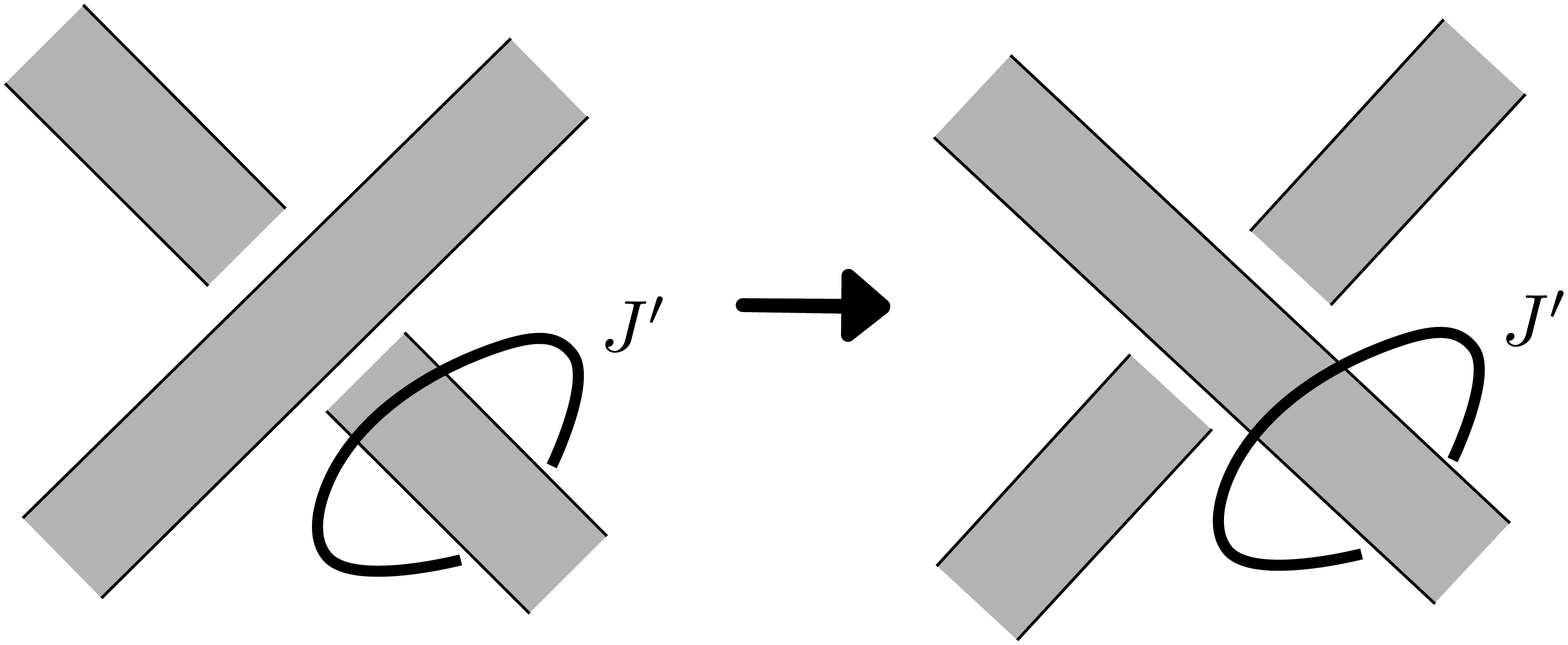}
\caption{Passing pieces of the band and the knot $K'$ over the disk bounded by $J'$ (not pictured).}
\label{fig:unknotting}
\end{center}
\end{subfigure}
\caption{}
\end{center}
\end{figure}

Then by the resolution of the Poenaru Conjecture \cite{pap:gabaifoliations3}, $J$ is an unknot in $S^3$.  Then for any disk $D$ in $S^3$ with $J = \partial D$, we must have  $K\cap D \neq \emptyset$, as otherwise we could perform in $S^3_r(K)$ the same debanding as we did in $Y$  to see that $K'$ is a nontrivial connected sum, and this would contradict the fact that $K$ is hyperbolic.  Thus no disk bounded by $J$ is disjoint from $K$, so $K$ lies in the solid torus $V = \overline{S^3 \setminus N(J)}$, and does not lie inside a ball in $V$.  Further, since $J$ was an unknot in $S^3_r(K)$ by construction, the manifold $V_r(K)$ resulting from surgery on $K$ in $V$ is reducible.  Hence $K$ is a cable by \cite{pap:scharlemann-reducible}, contradicting the assumption that $K$ is hyperbolic.
\end{proof}

\begin{proof}[Proof of Corollary \ref{cor:primeweb}]
Suppose $\Lambda$ is a great web in the general case with $l=v$ (if $v$ is prime this is forced by Proposition \ref{prop:webdivisibilty}), so that $\Lambda$ consists of a Scharlemann cycle $\sigma$ on the labels $(1,2)$ and a collection of edges which connect two vertices in $\sigma$.  Note then that there is only one Scharlemann cycle in $\Lambda$.  Suppose every edge of $\Lambda$ is parallel to an edge of $\sigma$.  Pick any vertex $v_0$ of $\sigma$ not containing the $\lambda$-ghost of $\Lambda$, for $\lambda = \frac{p+2}{2}$.  Then the $\lambda$-edge of $v_0$ is parallel to one of the two edges of $\sigma$ incident to $v_0$, and either way gives rise to a full quota.  Thus suppose not all edges of $\Lambda$ are parallel to edges of $\sigma$, and let $e$ be such an edge.  Then there is an arc $\gamma$ which is a subarc of $\partial\sigma$ with $\partial\gamma=\partial e$, such that $\gamma \cup e$ bounds a disk $D$ in $\Lambda$.  By the definition of $e$, $\gamma$ contains an interior vertex. By considering edges $e'$ interior to $D$ which are not parallel to edges of $\gamma$, one finds that there must be some interior vertex $v_0$ of $\gamma$ which is incident only to the two vertices adjacent to it along $\gamma$.  Such a $v_0$ necessarily gives rise to a full quota.  

In either situation, we see that $\Lambda$ is small, contradicting Proposition \ref{prop:largewebs}.
\end{proof}

Finally, we conclude the paper by showing a strengthening of \cite[Theorem 4.6]{pap:howiethreesummands} under the more restrictive setting of being contained in a great web.  

\begin{proof}[Proof of Corollary \ref{cor:noPSD}]
Suppose that $\sigma_1\cap\sigma_2$ contains at least $v_a\cup v_b$, for vertices $v_a$ and $v_b$ of $\Lambda$.  Note that we know that the two Scharlemann cycles are different lengths, by Lemma \ref{lem:howiescsarenice}(3).  By Proposition \ref{prop:webdivisibilty}, we see that $v=nl_1l_2$.  Then we see that the number $n_1$ of $\Gamma$-edges incident to $u_1$ lying interior to a single Scharlemann region of $G_P^1$ corresponding to $\sigma_1$ is the same for each region.  Thus $n_1=\frac{nl_1l_2-l_1}{l_1} = nl_2-1$.  Likewise, the number $n_x$ of $\Gamma$-edges incident to $u_x$ lying interior to a single Scharlemann region of $G_P^2$ corresponding to $\sigma_2$ has $n_x=nl_1-1$.  See Figure \ref{fig:biggamma}. Now as labels, $|a-b|$ is fixed, and this quantity can be counted by using the number of edge endpoints between the labels at $u_1$, and also at $u_x$, and we may set these two counts equal to one another.  However, we are not interested in $|a-b|$; we will instead count just those labels of $\Gamma$-edges lying between $a$ and $b$.  We see that, if between $a$ and $b$ on $G_P^i$ we have $k_i$ edges of $\sigma_i$, then
\begin{align*}
k_1 + (k_1+1)\cdot(nl_2-1) &= k_2 + (k_2+1)\cdot(nl_1-1),\\
k_1 + nl_2 + k_1nl_2-k_1 -1 &= k_2 + nl_1 +k_2nl_1-k_2-1,\\
k_1l_2 &= k_2l_1.
\end{align*}
Since $l_1$ and $l_2$ are relatively prime, we see that $l_2$ divides $k_2$.  But $k_2$ is some number of Scharlemann cycle edges of $\sigma_2$ lying in $G_P^2$ between a particular pair of such Scharlemann cycle edges, and hence $k_2 \leq l_2 -2$ (\textit{i.e.} $k_2$ certainly doesn't count the $a$-edge nor the $b$-edge of $u_x$).  Hence $l_2$ could not possibly divide $k_2$, a contradiction.  Thus a pair of Scharlemann cycles in $\Lambda$ can share at most one vertex.
\end{proof}

\begin{figure}[hbt]
\begin{center}
\includegraphics[width = .75\textwidth]{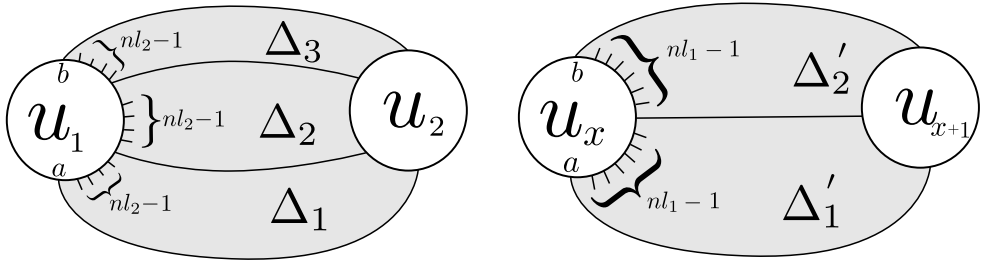}
\caption{The $\Gamma$-edges between $a$ and $b$ are counted for Corollary \ref{cor:noPSD}.  In the Figure, $k_1=2$ and $k_2=1$.}
\label{fig:biggamma}
\end{center}
\end{figure}

\bibliographystyle{amsalpha}
\bibliography{MasterBibliography}

\providecommand{\bysame}{\leavevmode\hbox to3em{\hrulefill}\thinspace}
\providecommand{\MR}{\relax\ifhmode\unskip\space\fi MR }
\providecommand{\MRhref}[2]{%
  \href{http://www.ams.org/mathscinet-getitem?mr=#1}{#2}
}
\providecommand{\href}[2]{#2}
\begin{thebibliography}{GAS86}

\bibitem[DP13]{pap:dynnikov_prasolov_Jones_Conjecture}
I.~A. Dynnikov and M.~V. Prasolov, \emph{Bypasses for rectangular diagrams. {A}
  proof of the {J}ones conjecture and related questions}, Trans. Moscow Math.
  Soc. (2013), 97--144. \MR{3235791}

\bibitem[EM92]{pap:EMcablingconjSI}
M.~Eudave-Mu{\~n}oz, \emph{Band sums of links which yield composite links.
  {T}he cabling conjecture for strongly invertible knots}, Trans. Amer. Math.
  Soc. \textbf{330} (1992), 463--501.

\bibitem[Gab87]{pap:gabaifoliations3}
D.~Gabai, \emph{Foliations and the topology of 3-manifolds, lll}, J. Diff.
  Geom. \textbf{26} (1987), 479--536.

\bibitem[GAS86]{pap:gonz-short}
F.~Gonzalez-Acuna and H.~Short, \emph{Knot surgery and primeness}, Math. Proc.
  Camb. Phil. Soc. \textbf{99} (1986), 89--102.

\bibitem[GL87]{pap:gordon-luecke-integral}
C.~McA. Gordon and J.~Luecke, \emph{Only integral {D}ehn surgeries can yield
  reducible manifolds}, Math. Proc. Cambridge. Phil. Soc. \textbf{102} (1987),
  97--101.

\bibitem[GL89]{pap:gordon-luecke-complement}
\bysame, \emph{Knots are determined by their complements}, J. Amer. Math. Soc.
  \textbf{2} (1989), 371--415.

\bibitem[GL96]{pap:gordon-luecke-distance}
\bysame, \emph{Reducible manifolds and {D}ehn surgery}, Topology \textbf{35}
  (1996), 385--409.

\bibitem[Gor83]{pap:gordonsatellite}
C.~{\relax McA}. Gordon, \emph{Dehn surgery and satellite knots}, Trans. Amer.
  Math. Soc. \textbf{275} (1983), no.~2, 687--708.

\bibitem[Gor97]{pap:Gordon97combinatorialmethods}
C.~Mc{A} Gordon, \emph{Combinatorial methods in {D}ehn surgery}, Lectures at
  KNOTS ’96, World Sci. Publishing, River Edge, NJ, 1997, pp.~263--290.

\bibitem[Hed10]{pap:hedden_notions_QP}
Matthew Hedden, \emph{Notions of positivity and the {O}zsv\'ath-{S}zab\'o
  concordance invariant}, J. Knot Theory Ramifications \textbf{19} (2010),
  no.~5, 617--629. \MR{2646650 (2011j:57020)}

\bibitem[HLZ13]{pap:hom_lidman_zufelt}
J.~Hom, T.~Lidman, and N.~Zufelt, \emph{Reducible surgeries and {H}eegaard
  {F}loer homology}, To appear in {\em Math. Res. Lett.}, 2013,
  \url{http://arxiv.org/abs/1307.5317}.

\bibitem[Hof95]{thesis:hoffman}
J.~Hoffman, \emph{Reducing spheres and embedded projective planes after {D}ehn
  surgery on a knot}, Ph.D. thesis, University of Texas at Austin, 1995.

\bibitem[Hof98]{pap:hoffman_no_SGC}
James~A. Hoffman, \emph{There are no strict great {$x$}-cycles after a reducing
  or {$P^2$} surgery on a knot}, J. Knot Theory Ramifications \textbf{7}
  (1998), no.~5, 549--569. \MR{1637581 (99h:57008)}

\bibitem[How02]{pap:howiescottwiegold}
J.~Howie, \emph{A proof of the {S}cott-{W}iegold conjecture on free products of
  cyclic groups}, J. Pure Appl. Algebra \textbf{173} (2002), 167--176.

\bibitem[How10]{pap:howiethreesummands}
\bysame, \emph{Can {D}ehn surgery yield three connected summands?}, Groups,
  Geometry, and Dynamics \textbf{4} (2010), no.~4, 785--797.

\bibitem[HS98]{pap:hayashi_shimokawa}
C.~Hayashi and K.~Shimokawa, \emph{Symmetric knots satisfy the cabling
  conjecture}, Math. Proc. Cambridge Philos. Soc. \textbf{123} (1998),
  501--529.

\bibitem[Lit80]{pap:litherland}
R.A. Litherland, \emph{Surgery on knots in solid tori, ii}, J. London Math.
  Soc. \textbf{2} (1980), 559--569.

\bibitem[LS14]{pap:lidman_sivek_contact_reducible}
T.~Lidman and S.~Sivek, \emph{Contact structures and reducible surgeries},
  \url{http://arxiv.org/abs/1410.0303}, 2014.

\bibitem[Mos71]{pap:mosertorus}
L.~Moser, \emph{Elementary surgery along a torus knot}, Pacific J. Math.
  \textbf{38} (1971), 737--745.

\bibitem[MT92]{pap:menasco-this}
W.~Menasco and M.~B. Thistlethwaite, \emph{Surfaces with boundary in
  alternating knot exteriors}, J. Reine Angew. Math. \textbf{426} (1992),
  47--65.

\bibitem[Ni07]{pap:nihfkfibered}
Y.~Ni, \emph{Knot {F}loer homology detects fibred knots}, Invent. Math.
  \textbf{170} (2007), no.~3, 577--608. \MR{2357503 (2008j:57053)}

\bibitem[OS04]{pap:ozsz3-manifolds_1}
Peter Ozsv{\'a}th and Zolt{\'a}n Szab{\'o}, \emph{Holomorphic disks and
  topological invariants for closed three-manifolds}, Ann. of Math. (2)
  \textbf{159} (2004), no.~3, 1027--1158.

\bibitem[OS05]{pap:ozszhflens}
P.S. Ozsv\'ath and Z.~Szab\'o, \emph{On knot {F}loer homology and lens space
  surgeries}, Topology \textbf{44} (2005), no.~6, 1281--1300. \MR{2168576
  (2006f:57034)}

\bibitem[Par90]{pap:parry}
W.~Parry, \emph{All types implies torsion}, Proc. Amer. Math. Soc. \textbf{110}
  (1990), 871--875.

\bibitem[Say09]{pap:sayaribridge}
N.~Sayari, \emph{Reducible {D}ehn surgery and the bridge number of a knot}, J.
  Knot Theory Ramifications \textbf{18} (2009), 493--504.

\bibitem[Sch85]{pap:scharlemannfirst}
M.~Scharlemann, \emph{Smooth spheres in $\mathbb{R}^4$ with four critical
  points are standard}, Invent. Math. \textbf{79} (1985), 125--141.

\bibitem[Sch90]{pap:scharlemann-reducible}
\bysame, \emph{Producing reducible 3-manifolds by surgery on a knot}, Topology
  \textbf{29} (1990), 481--500.

\bibitem[Sta78]{pap:stallings_constructions}
John~R. Stallings, \emph{Constructions of fibred knots and links}, Algebraic
  and geometric topology ({P}roc. {S}ympos. {P}ure {M}ath., {S}tanford {U}niv.,
  {S}tanford, {C}alif., 1976), {P}art 2, Proc. Sympos. Pure Math., XXXII, Amer.
  Math. Soc., Providence, R.I., 1978, pp.~55--60. \MR{520522 (80e:57004)}

\bibitem[Tho87]{pap:thompson_bandsum_propP}
Abigail Thompson, \emph{Property {${\rm P}$} for the band-connect sum of two
  knots}, Topology \textbf{26} (1987), no.~2, 205--207. \MR{895572 (88i:57004)}

\bibitem[Thu82]{pap:thurston3mfldkleinian}
William~P. Thurston, \emph{Three-dimensional manifolds, {K}leinian groups and
  hyperbolic geometry}, Bull. Amer. Math. Soc. (N.S.) \textbf{6} (1982), no.~3,
  357--381.

\bibitem[VS99]{pap:valdezsanchez}
L.G. Valdez~{S}\'{a}nchez, \emph{Dehn fillings of 3-manifolds and
  non-persistent tori}, Topology Appl. \textbf{98} (1999), 355--370.

\bibitem[Wu96]{pap:wuarborescent}
Ying-Qing Wu, \emph{Dehn surgery on arborescent knots}, J. Differential Geom.
  \textbf{43} (1996), no.~1, 171--197.

\end{thebibliography}
\end{document}